\newtheorem{theorem}{Theorem}[section]
\newtheorem{lemma}[theorem]{Lemma}
\theoremstyle{definition}
\newtheorem{definition}[theorem]{Definition}
\theoremstyle{remark}
\newtheorem{remark}[theorem]{Remark}
\numberwithin{equation}{section}
\newcommand{\p}{\partial}
\newcommand{\wt}{\widetilde}
\newcommand{\BFR}{\mathbf{R}}
\newcommand{\BFN}{\mathbf{N}}
\newcommand{\BFC}{\mathbf{C}}
\newcommand{\ep}{\epsilon}
\newcommand{\be}{\begin{equation} }
	\newcommand{\ee}{\end{equation}}
\newcommand{\bse}{\begin{subequations}}
	\newcommand{\ese}{\end{subequations}}
\begin{document}

\title{On the rapidly rotating vorticity in the unit disk}

\author{Yuchen Wang$^{1,2}$}

\address{${}^1$ School of Mathematics and Statistics, Central China Normal University\\ Wuhan 430079, Hubei, China}
\address{${}^2$ School of Mathematical Science, Tianjin Normal University\\ Tianjin, 300387,  China}
\email{wangyuchen@mail.nankai.edu.cn}
\thanks{Partially supported by the National Science Foundation of China No.11831009 and Hubei Province Science and Technology Innovational Funding. }

\maketitle

\begin{abstract}
In this paper, we obtain uniformly rotating vorticity with sufficiently large angular velocity in the unit disk.  The solution consists of either a small nearly-ellipse vortex patch which is highly concentrated near the origin or a $2+1$ configuration in which the another two vortical components are very close to the boundary of fluid domain. The total vorticity are prescribed and the sizes of vortical domains are not necessarily small in the same order. 

The construction, which is based on a perturbation argument, exhibits a subtle multiscale phenomenon in this highly rotating fluid. \\

{\bf Keywords}: Rotating vortex patch; Kirchhoff ellipse patch; Concentrated vorticity; Conformal mapping; Robin function
\end{abstract}

\section{Introduction} \label{S:Intro}

Consider the 2-D  incompressible Euler equation in the unit disk $ D:=\{ x \in \BFR^2 | \;|x| \leq 1 \}$ with the slip boundary condition
\be \label{E:Euler-E}
\begin{cases}
	 \p_t u + (u \cdot \nabla) u = - \nabla p,   & \quad x \in D, \\
	 \nabla \cdot u = 0, & \\
	u \cdot \BFN = 0, & \quad x \in S^1,
\end{cases}
\ee
where $u=(u^1,u^2)^t$ is the velocity of fluid, $p$ is the pressure and $\mathbf{N}$ denotes the unit outward normal to $\p D$. Under mild smooth condition, it is equivalent to work with the vorticity formulation of the 2-D incompressible Euler equation 
\be \label{E:Euler-V}
 \p_t \omega + u \cdot \nabla \omega = 0
\ee
with the vorticity field given by $\omega := \p_1 u^2 -\p_2 u^1$. It is a well-known closed PDE system for $\omega$ which is globally well-posed for classical solutions. The global well-posedness for weak solution $\omega \in L^\infty(D)$ was obtained by Yudovich in \cite{Yud1963}.

Special interest is addressed on the vortex patch given as $\omega = \lambda \mathbf{I}_K$ where $\lambda$ is non-zero constant, $K$ is bounded domain and $\mathbf{I}$ is the characteristic function. It is clearly that, due to Yudovich's result, vortex patch defines a unique 2-D Euler flow globally in the time. Since the vorticity is only transported by the Euler flow due to \eqref{E:Euler-V}, evolution of vortex patch is indeed boundary dynamics of vortical domain given by
\be \label{E:Dyn-VP}
\p_t z(s,t) \cdot \mathbf{n} = u\left(z(s,t),t\right) \cdot \mathbf{n}, \; \quad s \in S^1, 
\ee
where  $z(s,t)$ is the arclength parameterization of $\p K(t)$ and $\mathbf{n}$ denotes the unit outward normal to $\p K(t)$. This equation is highly nonlinear and nonlocal hence its dynamical behaviors would be very rich. On the other hand, the global regularity of vortex patch was proved by Chemin in \cite{Che1993} provided the initial data $\p K(0) \in C^{k,\alpha}(S^1)$, $k \in \mathbb{N}, 0<\alpha<1$, namely the scenario of finite-time blow-up is ruled out. See a different proof given by Bertozzi and Constantin in \cite{BC94}.\\ 

The central issue in patch dynamics is to investigate the long-time dynamics. As the first step, it is essential to well understand  structure of steady states for \eqref{E:Dyn-VP}. 
Because of the $SO(2)$ invariance, we shall focus on the uniformly rotating vorticity or relative equilibrium rather than absolute equilibrium. 
\begin{definition}
Let $\omega_0$ be the initial data. $\omega$ is a uniformly rotating vorticity if the following condition
\be
\omega(t,x) = \omega_0(e^{-it \Omega}x), \quad x \in D, 
\ee
holds for some angular velocity  $\Omega \in \BFR$.
\end{definition}
It is straightforward that vortex patch $\omega = \lambda \mathbf{I}_K$ is uniformly rotating if
\be \label{E:Steady-V}
-\Omega z^\perp \cdot  \mathbf{n} =\p_t z \cdot \mathbf{n} = u \cdot \mathbf{n}, \quad z \in \p K(t), \quad x^\perp := (x_2,-x_1),
\ee
which is completely determined by the initial data.  It is a trivial conclusion that radial solutions including circular and annular patches, are uniformly rotating vortex patch for any $\Omega \in \BFR$. By carefully studying the linearization on trivial patches, lots of $m$-fold non-trivial rotating vortex patch on the whole plane were obtained due to local bifurcation argument \cite{CR1971}, see for example \cite{Burbea1982,Hmidi2013,Hoz2016a,Burbea1982} and references therein. In the unit disk, the local bifurcation curve emanating from circular/annular vortex patches was obtained by de la Hoz, Hassainia, Hmidi and Mateu in \cite{Hoz2016}. 

On the other hand there is also another explicit non-trivial rotating vortex patch on the whole plane called  the Kirchhoff ellipse vortex patch 
\[
\omega = \lambda \mathbf{I}_{E_{a,b}},\text{ where } E_{a,b}:=\left\{(x,y) \in \BFR^2 \big| \frac{x^2}{a^2} + \frac{y^2}{b^2} \leq 1 \right\}.
\]
It rotates with the angular velocity $0<\Omega = \frac{\lambda ab}{(a+b)^2} \leq \frac{\lambda}{4}$. It is well-known that the Kirchhoff ellipse vortex patch is linear stable for $b/a>\frac{1}{3}$ and unstable mode occurs and grows expontentially if $b/a<\frac{1}{3}$. Hmidi and Mateu \cite{Hmidi2016} and  Castro, C\'{o}rdoba and G\'{o}mez-Serrano in \cite{Castro2016} obtained a family of one-fold uniformly rotating vortex patch bifurcated from the Kirchhoff ellipse vortex patch at the critical ratio $b/a =\frac{1}{3}$. Note that the Kirchhoff ellipse patch is on the $2$-fild bifurcation emanating from disk. These results affirm that a secondary bifurcation occurs.

As the fluid domain is less symmetrical, on the other hand, there is no known explicit trivial solution of \eqref{E:Euler-V} so far. To construct steady vorticity, an alterhative approach called {\it desingularization } is widely implemented. The essential ingredient is the celebrated vortex point model proposed by Kirchhoff which is used to descirbe evolution of concentrated vorticity approximately. A mathematical rigorous proof concerned with the validity of vortex points model can be found in the monograph \cite{MP1994}. Note that vortex point system is a finite-dimensional approximating of the 2-D Euler equation. A key observation is that the steady vorticity would persist near the configuration of steady vortex points. In the last decades, several approaches are developed, for example singular perturbation argument \cite{Wan1988,LWZ2019}, singular perturbed elliptic equations \cite{Cao2014,Cao2015,Smets2010} and variational approach \cite{Tur83,CW19,Cao2021,Wan2021}. Vortical domains constructed in these papers are perturbation of small disks. \\

In this paper, we obtained a different type rotating vortex patch. They are highly concentrated near the orgin or the boundary of fluid domain with non-vanishing total vorticity in each vortical component. They manifest a nearly ellipse vortex patch located on the origin driving the fluid for a rapidly rotation. More precisely, let us consider simply-connected vortex patch at first. The first result is read as follow
\begin{theorem} \label{T:main-1}
Suppose $0 < Q <\frac{1}{2}$ and $s>\frac{3}{2}$. There exists $\ep_0>0$ such that for any $0<\ep < \ep_0$, there exists a family of one-fold uniformly rotating vortex patch $\omega = \frac{1}{ \pi \ep^2}\mathbf{I}_K$ with angular velocity $\Omega = \frac{1}{4\pi \ep^2}(1-Q^2)$ with the prescribed total vorticity $\int_D \omega = 1-Q^2$. The vortex patch is concentrated near the origin and  $\p \left(\ep^{-1}  K\right)$ is a $H^s$ closed curve which is an $O(\ep^2)$ perturbation of the elliptic curve
\be
\mathbf{S}_Q:=\{ (x,y) \in \BFR^2 \;\;\big| \frac{x^2}{(1+Q)^2} + \frac{y^2}{(1-Q)^2} = 1 \}.
\ee
in the $H^s(S^1)$ topology.
\end{theorem}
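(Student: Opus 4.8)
The plan is to realize the patch, after blowing up the spatial scale by $\ep$, as a small perturbation of an \emph{exact} Kirchhoff rotating ellipse, and to close the perturbation by an implicit function theorem in a conformal parametrization of the free boundary. Normalize the stream function $\psi$ of $\omega=\frac1{\pi\ep^2}\mathbf{I}_K$ so that $\psi=0$ on $S^1$; then $\psi(x)=\frac1{\pi\ep^2}\int_K G(x,y)\,dy$, where $G(x,y)=-\frac1{2\pi}\log|x-y|+h(x,y)$ is the Dirichlet Green function of $D$ and the regular part $h(x,y)=\frac1{4\pi}\log\!\big(1-2\,x\cdot y+|x|^2|y|^2\big)$ is smooth near $(0,0)$ with $h(0,0)=0$ and $\nabla h(0,0)=0$ (this double vanishing is special to the centre of the disk). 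Condition \eqref{E:Steady-V} for uniform rotation is equivalent to $\p_\tau\big(\psi+\tfrac\Omega2|x|^2\big)\equiv0$ along $\p K$, $\p_\tau$ being the arclength derivative. Rescaling $x=\ep\xi$, $K=\ep\tilde K$ (so that $|\tilde K|=\pi(1-Q^2)$ by the prescribed total vorticity) and using $\tfrac12\Omega\ep^2=\tfrac{1-Q^2}{8\pi}$, the condition becomes $\p_\tau\Phi_\ep\equiv0$ on $\p\tilde K$, where
\be
\Phi_\ep(\xi)=\frac1\pi\int_{\tilde K}\Big(-\frac1{2\pi}\log|\xi-\eta|\Big)\,d\eta+\frac1\pi\int_{\tilde K}h(\ep\xi,\ep\eta)\,d\eta+\frac{1-Q^2}{8\pi}|\xi|^2 .
\ee
At $\ep=0$ the middle term disappears, and $\p_\tau\Phi_0\equiv0$ on $\p\tilde K$ is exactly the equation for a Kirchhoff rotating ellipse patch of vorticity amplitude $\tfrac1\pi$, area $\pi(1-Q^2)$ and angular velocity $\tfrac{1-Q^2}{4\pi}$: the Kirchhoff identity $\tfrac{(1/\pi)ab}{(a+b)^2}=\tfrac{1-Q^2}{4\pi}$ together with $ab=1-Q^2$ forces $a+b=2$, hence $\{a,b\}=\{1+Q,1-Q\}$, so the unique such patch is the region bounded by $\mathbf{S}_Q$. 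The hypothesis $0<Q<\tfrac12$ says exactly that the axis ratio $b/a=\tfrac{1-Q}{1+Q}$ lies in $(\tfrac13,1)$, the linearly stable, non-degenerate range of the Kirchhoff ellipse.

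Next I would parametrize the domains near $\mathbf{S}_Q$ by exterior conformal maps: the exterior of $\mathbf{S}_Q$ is the conformal image of $\{|w|>1\}$ under the explicit Joukowski map $\phi_0(w)=w+Q/w$, and I look for $\phi(w)=R\big(w+Q/w+g(w)\big)$ with $R>0$ near $1$ (the exterior conformal radius) and $g(w)=\sum_{n\ge1}g_nw^{-n}$ small in $H^s(S^1)$. Imposing the two reflection symmetries of $\mathbf{S}_Q$ (so $g$ is real, without constant term) kills the rotation and translation directions that would otherwise lie in the kernel of the linearization, while the parameter $R$ supplies the dilation direction that is needed because the area constraint $|\tilde K_{R,g}|=\pi(1-Q^2)$ (equivalent to the prescribed total vorticity) is an extra scalar equation. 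With $\tilde K_{R,g}$ the enclosed domain, the problem becomes: find $(R,g)$ near $(1,0)$ solving $\p_\tau\Phi_\ep|_{\p\tilde K_{R,g}}\equiv0$ together with $|\tilde K_{R,g}|=\pi(1-Q^2)$. Since $s>\tfrac32$ gives $H^s(S^1)\hookrightarrow C^1(S^1)$, the layer/Biot--Savart potentials in $\Phi_\ep$ and the geometric data $\mathbf n,\p_\tau$ along $\p\tilde K_{R,g}$ are well defined; the left-hand side defines a $C^1$ map $F(\ep,R,g)$ on a neighbourhood of $(0,1,0)$ in $\BFR\times\BFR\times H^s(S^1)$ (with $g$ in the symmetric subspace), valued in the product of the mean-zero symmetric subspace of $H^{s-1}(S^1)$ with $\BFR$, and $F(0,1,0)=0$ by the first paragraph.

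The construction is then an implicit function theorem at $(\ep,R,g)=(0,1,0)$. The partial derivative $D_{(R,g)}F(0,1,0)$ is easily inverted: a Kirchhoff ellipse and all its dilates (at fixed vorticity amplitude) rotate at the same rate, so the $R$-derivative of the first component vanishes, while $\p_R|\tilde K_{R,g}|$ at $(1,0)$ equals $2\pi(1-Q^2)\ne0$; and the $g$-derivative of the first component is the classical linearization of the rotating-patch equation at the Kirchhoff ellipse, which in the Fourier variable of the conformal parametrization is (block-)diagonal, with symbol vanishing only at the circular degeneracy $b/a=1$ and the secondary-bifurcation ratio $b/a=\tfrac13$ of \cite{Hmidi2016,Castro2016}, hence an isomorphism of the symmetric subspace onto the mean-zero symmetric subspace of $H^{s-1}$ throughout $\tfrac13<b/a<1$; one therefore solves the first equation for $g$ and then the second for $R$. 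For the $\ep$-dependence, smoothness of $h$ near $(0,0)$ with $h(0,0)=0$, $\nabla h(0,0)=0$ gives that $h(\ep\,\cdot\,,\ep\,\cdot\,)$ is $O(\ep^2)$ in every $C^k$ norm on $\tilde K_0\times\tilde K_0$, hence $\|F(\ep,1,0)\|=O(\ep^2)$, while $\ep\mapsto F(\ep,\cdot,\cdot)$ is $C^1$. The implicit function theorem then produces, for every $0<\ep<\ep_0$, a unique $(R(\ep),g(\ep))$ near $(1,0)$ with $F(\ep,R(\ep),g(\ep))=0$ and $\|(R(\ep)-1,g(\ep))\|=O(\ep^2)$. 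Then $K=\ep\,\tilde K_{R(\ep),g(\ep)}$ has area exactly $\pi\ep^2(1-Q^2)$, so $\omega=\frac1{\pi\ep^2}\mathbf{I}_K$ has the prescribed total vorticity $1-Q^2$ and, by construction, rotates uniformly with angular velocity $\tfrac{1-Q^2}{4\pi\ep^2}$; and since $R(\ep)=1+O(\ep^2)$ and $g(\ep)=O(\ep^2)$, the rescaled boundary $\p(\ep^{-1}K)=\phi(S^1)$ is an $H^s$ closed curve that is an $O(\ep^2)$ perturbation of $\mathbf{S}_Q$ in $H^s(S^1)$. I expect the main obstacle to be the spectral step: showing that the linearized Kirchhoff operator is an isomorphism on the reduced space throughout $\tfrac13<b/a<1$, i.e. identifying its kernel and cokernel with exactly the symmetry and area-constraint directions and excluding spurious resonances. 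A secondary, more technical point is the rigorous transfer of the free-boundary condition $\p_\tau\Phi_\ep\equiv0$ from the unknown curve $\p\tilde K_{R,g}$ to the fixed circle together with the requisite $H^s$-mapping estimates for the nonlinear contour operators — routine, but this is where $s>\tfrac32$ and the smoothness of $h$ near the origin are needed.
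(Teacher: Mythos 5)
Your overall route is the same as the paper's: rescale by $\ep$, recognize the $\ep=0$ limit as the exact Kirchhoff ellipse bounded by $\mathbf{S}_Q$ (via $ab=1-Q^2$, $a+b=2$), observe that the regular part of the disk Green's function only contributes at higher order near the origin, parametrize the free boundary by exterior conformal maps, and close with an implicit function theorem whose key input is the invertibility of the linearized Kirchhoff operator for $0<Q<\tfrac12$. The genuine gap sits exactly at the step you yourself flag as the ``main obstacle''. First, the linearization at the ellipse is \emph{not} (block-)diagonal in the conformal Fourier coefficients: by the formula the paper imports from Hmidi--Mateu (Lemma \ref{L:Lin-Elli}), the $\sin n\theta$ output couples the two inputs $B_{n+1}$ and $B_{n-1}$ through $\bigl(\tfrac{1-Q^2}{2}n-1-Q^n\bigr)(B_{n+1}-QB_{n-1})$; invertibility therefore needs both the nonvanishing of these factors for every $n\ge3$ (there is a resonant aspect ratio for each $n$, not just $b/a\in\{1,\tfrac13\}$ --- it merely happens that the smallest resonant value is $Q=\tfrac12$, which the paper checks via $\tfrac{1-Q^2}{2}n-1-Q^n>\tfrac38 n-1-2^{-n}\ge 0$) and a recursion argument ($B_{n+1}=Q^{n-2}B_3+\sum_m Q^m f_{n+1-m}$) to bound the inverse. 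None of this is supplied in your sketch.

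Second, and more seriously, in the coordinates you chose the claimed isomorphism is false. You keep the $n=1$ coefficient $g_1$ among the unknowns, alongside $R$ and the scalar area equation. In the doubly symmetric class the inputs are the odd modes $g_1,g_3,g_5,\dots$ plus $R$, the outputs are the even modes $\sin2\theta,\sin4\theta,\dots$ plus the area; because the ellipse map has only the two terms $w$ and $Q/w$, each output row $\sin n\theta$ involves only $g_{n-1},g_{n+1}$, and the $g_1$ direction --- which stays inside the family of exact ellipses $w+(Q+t)/w$, on which the planar functional is an explicit multiple of $\sin2\theta$ --- feeds only into the $\sin2\theta$ row. Consequently the $g$-linearization of the rotation condition is surjective (solve recursively for $g_3,g_5,\dots$) but has a one-dimensional kernel: solving the homogeneous two-term recursion from a given $g_1$ yields coefficients decaying geometrically like $Q^k$, hence an $H^s$ kernel element. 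Since $\p_R$ of the rotation condition vanishes (as you note) while $\p_R(\mathrm{area})\neq0$, the full map $D_{(R,g)}F(0,1,0)$ is surjective with one-dimensional kernel, i.e.\ Fredholm of index one, not an isomorphism, so the implicit function theorem (and your uniqueness assertion) fails as written. The paper avoids this by freezing the ellipse parameter $Q$ and excluding the modes $n\le1$ from the unknowns: the perturbation lives in $X^s=\{\sum_{n\ge2}B_n\cos n\theta\}$ with the conformal prefactor $A$ slaved to the area by \eqref{E:A}, and then the triangular structure in Lemma \ref{L:Lin-Elli} gives a genuine isomorphism $X^s\to Y^{s-1}$. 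To repair your argument, drop $g_1$ (do not let the aspect ratio vary) or add a Lyapunov--Schmidt reduction in that direction.
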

\begin{remark}
It would be an interesting issue whether this nearly ellipse vortex patch locates on the bifurcation curve emanating from a small Rankine vortex with prescribed total vorticity. We believe that the normalized boundary $\p \left(\ep^{-1} K \right)$ would be transcendental rather than algebraic.   
\end{remark}
\begin{remark}
The linear/nonlinear (in)stabiltiy of rotating vortex patch would recovered from the case on whole plane \cite{Wan1986, Guo2004} with slightly modifications. A detailed analysis on the local dynamics of the rotating vortex patch would be given in a forthcoming paper.   
\end{remark}
If another vortical domain is involved, the situation is more subtle. On the one hand, it is hard to expert that a vortical domain largely deviated from disk is steady if its location is far from the origin. On the other hand, since a small Kirchhoff ellipse vortex patch rotates very fast due to the total vorticity is prescribed, the location of another vortical domain would be very close to the boundary of fluid domain.    

\begin{theorem} \label{T:main-2}
Suppose $0<Q<\frac{1}{2}$, $s>\frac{3}{2}$ and $\mu>0$. There exists $\ep_0>0$ such that for $\vec{r}=(r_0, r_1,r_2) \in \BFR_+^3$ with $|\vec{r}| < \ep_0$ in which $r_1,r_2 < \frac{r_0^2}{2}$, there exists a family of uniformly rotating vorticity 
\[
\omega = \frac{1}{\pi r_0^2} \mathbf{I}_{D_0} + \omega_1  + \omega_2 
\]
with angular velocity $\Omega = \frac{1-Q^2}{4\pi r_0^2} + \frac{\mu}{4 \pi }$, where $|D_0| = \pi r_0^2(1-Q^2)$ and $\omega_j$ satisfying \eqref{E:Multi-con} can be piecewisely constant vortex patch or Lipschitz continuous vorticity. Moreover, $\p(r_0^{-1}  D_0)$ is an $O(r_0^2)$ perturbation of the elliptic curve $\mathbf{S}_Q$ and each $r_j^{-1} D_j$ is an $O(|r_j|r_0)$ perturbation of disk $B_1$ in the $H^s$-topology for $j=1,2$. The distence between the vortical domain $D_j$ and boundary of the fluid domain satisfies 
\[
d\left(D_j,\p D \right) = \frac{2 \mu}{1-Q^2} r_0^2 + O(r_0^4), \quad j=1,2. 
\]
\end{theorem}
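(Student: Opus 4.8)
The plan is to set up a fixed-point/implicit-function-theorem scheme for the $2+1$ configuration, treating the near-origin Kirchhoff-type patch and the two near-boundary components as coupled unknowns, with the multiscale structure ($r_1, r_2 \ll r_0^2 \ll 1$) dictating which interactions are leading order and which are remainders. First I would normalize: rescale the central patch $D_0$ by $r_0$ so that $r_0^{-1}D_0$ is a perturbation of $\mathbf{S}_Q$, and rescale each $D_j$ by $r_j$ so that $r_j^{-1}D_j$ is a perturbation of the unit disk $B_1$. The stream function splits as $\psi = \psi_0 + \psi_1 + \psi_2 + \psi_{\mathrm{harm}}$, where each $\psi_j$ is the Green's-function convolution of $\omega_j$ against the Dirichlet Green's function $G_D$ of the disk, and the rotation condition \eqref{E:Steady-V} must hold on each of the three boundaries $\p D_0, \p D_1, \p D_2$ simultaneously. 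On $\p D_0$, the dominant balance is exactly the one producing Theorem~\ref{T:main-1}: the self-interaction of the central patch plus the rotation term $\Omega z^\perp$ with $\Omega \approx \frac{1-Q^2}{4\pi r_0^2}$ reproduces the Kirchhoff ellipse equation to leading order, and the influence of $\omega_1, \omega_2$ (located at distance $O(r_0^2)$ from the boundary, hence $O(1)$ from the origin) together with the boundary-correction (Robin) part of $G_D$ enters only at order $O(r_0^2)$, consistent with the claimed perturbation size.

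Next I would handle the near-boundary components. For each $D_j$, $j=1,2$, write the center as a point $y_j \in D$ with $1 - |y_j| \sim r_0^2$, and expand the equation \eqref{E:Steady-V} on $\p D_j$ after rescaling by $r_j$. The key point is that the self-interaction of $\omega_j$ is $O(|\log r_j|)$ or $O(1)$ depending on whether it is a patch or a smooth profile, the interaction with the central patch behaves like that of a point vortex of strength $1-Q^2$ at the origin (giving a nearly radial field of size $O(1/|y_j|) = O(1)$ near $\p D$), and the Robin/boundary-reflection term $H_D(x,y_j)$ blows up like $\frac{1}{1-|y_j|^2} \sim \frac{1}{r_0^2}$ as $y_j$ approaches $\p D$. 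The location equation — the requirement that the net translational velocity felt by the center of $D_j$ equals $\Omega y_j^\perp$ — is where the distance $d(D_j, \p D)$ gets pinned down: balancing the image-vortex repulsion from the wall against the rotation and the pull of the central patch gives $1 - |y_j| = \frac{\mu}{1-Q^2} r_0^2 + O(r_0^4)$, i.e. $d(D_j, \p D) = \frac{2\mu}{1-Q^2} r_0^2 + O(r_0^4)$ after accounting for the factor of two in $1 - |y_j|^2 = (1-|y_j|)(1+|y_j|)$. The shape equation for $r_j^{-1}\p D_j$ then says this domain is a disk up to an $O(|r_j| r_0)$ correction, the $r_0$ coming from the gradient of the ambient (central-patch-plus-image) field over a ball of radius $r_j$.

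I would then assemble everything into a single nonlinear operator $\mathcal{F}(\phi_0, \phi_1, \phi_2, y_1, y_2) = 0$ on a product of $H^s(S^1)$ balls (the $\phi_j$ being the boundary perturbations) times a parameter set for the centers, quotient out the rotational and (for $D_1, D_2$) translational degrees of freedom to make the linearization Fredholm of index zero, and verify invertibility of the linearization at the approximate solution. At $\p D_0$ the relevant linearized operator is essentially the one that appears in the Kirchhoff/Hmidi–Mateu analysis at ratio $b/a = (1-Q)/(1+Q) > 1/3$ (valid since $Q < 1/2$), which is invertible on the complement of the one-dimensional kernel coming from rotation; at $\p D_1, \p D_2$ the linearization is a small perturbation of the operator governing a single near-boundary vortex patch, again invertible after modding out rigid motions. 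The contraction/IFT step closes once the off-diagonal coupling blocks are shown to be small in the appropriate weighted norms — and this is exactly where the hypothesis $r_1, r_2 < r_0^2/2$ is used, ensuring the feedback of the tiny components $D_1, D_2$ onto $D_0$ (and onto each other) is genuinely a higher-order perturbation.

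The main obstacle I expect is the multiscale bookkeeping in the linearization: the three boundary equations live at different scales ($r_0$ versus $r_j$), the Robin term $H_D(x,y_j) \sim (1-|y_j|^2)^{-1}$ is nearly singular, and one must choose weighted norms (or rescale each block separately) so that the full linearized operator is boundedly invertible uniformly as $\vec r \to 0$, rather than merely invertible for each fixed $\vec r$. A secondary technical difficulty is verifying that the non-degeneracy condition inherited from the Kirchhoff ellipse at $b/a = (1-Q)/(1+Q)$ persists under the $O(r_0^2)$ perturbation induced by the presence of $D_1, D_2$ and the disk geometry, and — if one wants the Lipschitz-continuous (non-patch) alternative for $\omega_j$ — checking that the profile can be chosen so that its self-interaction is compatible with the rotation condition, which amounts to solving a one-dimensional ODE for the profile and is routine but must be stated carefully.
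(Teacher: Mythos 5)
Your proposal follows essentially the same route as the paper: conformal-mapping parameterization of all three boundaries with each block rescaled by its own radius so the limiting linearization as $\vec r \to 0$ is nondegenerate, the Kirchhoff-ellipse linearization of Lemma \ref{L:Lin-Elli} (valid for $0<Q<\tfrac12$) for the central patch, the image-vortex versus rigid-rotation balance pinning the satellites at distance $O(r_0^2)$ from $\p D$ with the center corrections $y_j$ absorbing the $\sin\theta$ (translation) modes, and a joint implicit-function-theorem step, exactly as in Section \ref{S:Multi}. The one caveat is that your reconciliation of the distance constant --- deducing $d(D_j,\p D)=\frac{2\mu}{1-Q^2}r_0^2$ from $1-|y_j|=\frac{\mu}{1-Q^2}r_0^2$ via the factor $1+|y_j|$ in $1-|y_j|^2$ --- is not a valid step as written (the distance is $1-|y_j|$ up to $O(r_j)$, not $1-|y_j|^2$), though this only affects the bookkeeping of the constant, on which the paper's own computation in Lemma \ref{L:Point} is itself loose.
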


\begin{remark}
The vortex components $\omega_j$ are neither symmetric nor small as the same order, for example $\omega_1 = \frac{\mu }{\pi r_j^2} \mathbf{I}_{D_1}$ is a vortex patch and $\omega_2 = \omega_2(x) \mathbf{I}_{D_2} \in C^{0,1}(D)$ with $\int_{D_2} \omega_2 = \mu$. The presence of extra vortical components slightly accelerates the angular velocity by an order $O(1)$ term. The condition $\max\{r_1,r_2\} <\frac{r_0^2}{2}$ is to avoid the collision of vortical domain and the boundary. 
\end{remark}

This paper is organized as follow. In Section \ref{S:Pre}, we introduce necessary backgrounds, including steady states and conformal mapping parametrization. In Section \ref{S:Sin-patch}, we consider the rapidly rotating simply-connected vortex patch and prove 
Theorem \ref{T:main-1}. Theorem \ref{T:main-2} for uniformly rotating vorticity with  multiple vortical components is proved in Section \ref{S:Multi}.

\section{Preliminaries} \label{S:Pre}

 The incompressible condition as well as the simply-connectedness of the fluid domain guarantee that there exists a unique globally-defined stream function 
\be \label{E:Vor-Stream}
- \Delta \Psi = \omega \; \text{ in } D, \quad \Psi = 0  \;  \text{ on } \p D,
\ee
such that $u = \nabla^\perp \Psi$. Therefore the stream function is explicitly represented by
\be
\Psi(x,t) = \int_{D} G_D(x,y) \omega(y,t) d \mu(y),
\ee
as well as the velocity
\be \label{E:Velo-E} 
u(x,t) = \nabla^\perp \Psi= \int_{D} \nabla_x^\perp  G_D(x,y) \omega(y,t) d \mu(y),
\ee
where $d \mu$ denotes the Euclidean Lebesgue measure on $\BFR^2$ and  
\be \label{E:Green-f}
G_D(x,y) = - \frac{1}{2 \pi} \log|x-y| + \frac{1}{2 \pi} \log \left||y| \left(x - \frac{y}{|y|^2} \right) \right|, \; x,y \in D,
\ee
denotes the Dirichlet Green's function for the unit disk.

For vortex patch $\omega=\lambda \mathbf{I}_{K}$, the velocity would be represented in the contour integral formula. Indeed, consider it as a complex-valued function $u = u^1(x_1,x_2) + i u^2(x_1,x_2)$ for complex variable $z =x_1 + i x_2 \in \mathbb{C}$, by the Stokes' theorem, one has  
\be \label{E:Contour-F}
\begin{split}
	\overline{u(z,t)} & = - \frac{i \lambda}{2 \pi} \int_{K} \frac{1}{z - \xi } d \mu_\xi  -  \frac{i\lambda}{2 \pi } \int_{K} \frac{\overline{\xi}}{1 - z \overline{\xi}} d \mu_\xi 
	 \\
	 & =\frac{\lambda}{4 \pi} \int_{K} \frac{1}{\xi-z} d \overline{\xi} \wedge d \xi + \frac{\lambda}{4 \pi} \int_{K} \frac{\overline{\xi}}{1 - z \overline{\xi}} d \xi \wedge d \overline{\xi} \\
	& = \frac{\lambda}{4 \pi} \int_{\p K} \frac{\overline{\xi}-\overline{z}}{\xi - z} d \xi  - \frac{\lambda}{4 \pi} \overline{ \int_{K} \p_{\overline{\xi}} \left( \frac{|\xi|^2}{1 - \overline{z} \xi} d \xi \right)} \\
	& = \frac{\lambda}{4 \pi } \int_{\p K} \frac{\overline{\xi}-\overline{z}}{\xi - z} d \xi- \frac{\lambda}{4 \pi} \int_{\p K} \frac{|\xi|^2}{1 - \overline{z} \xi} d \xi.
\end{split}
\ee
Recall the inner product on complex domain $a \cdot b = {\rm Re} (\overline{a} b) = {\rm Im} (i \overline{a} b)$. \eqref{E:Steady-V} imples $\omega = \lambda \mathbf{I}_{K}$ is a uniformly rotating vortex patch if 
\be \label{E:Rotating-VP}
{\rm Im}(i \overline{u}  \mathbf{n})  = {\rm Im}( \Omega \overline{z}  \mathbf{n}),
\ee
where $\mathbf{n}$ denotes the unit outward normal to $\p K$. Inserting the formula \eqref{E:Contour-F} into \eqref{E:Rotating-VP}, it is equivalent that $\p K$ is zero point of the nonlinear functional
\begin{gather} \label{E:Steady-VP-eq}
	{\rm Im} \left( (2 \Omega \overline{z} + I(z)) \mathbf{n} \right) = 0, \; \forall z \in \p K,  
\end{gather}
where
\[
I(z) = \frac{\lambda}{2 \pi i } \int_{\p K} \frac{\overline{\xi-z}}{\xi -z} d \xi - \frac{\lambda}{2 \pi i } \int_{\p K} \frac{|\xi|^2}{1 - \overline{z} \xi} d \xi.
\]
To formulate the problem in an appropriate analytical framework, we follow the conformal mapping parameterization for unknown domain. Suppose that $\ep$ is a small prescribed parameter,  $\ep^{-1}K$ is a simply-connected bounded domain with prescribed measure $|K| = \ep^2 \pi(1 -Q^2)$ which is not far from the ellipse
\[
\mathbf{E}_Q:=\{(x,y) \in \BFR^2 \big| \frac{x^2}{(1+Q)^2} + \frac{y^2}{(1-Q)^2} \leq 1 \},
\]  
in the sense of boundary perturbation in $H^s$ topology. By the Riemannian mapping theorem, there exists a unique conformal mapping between the exterior domains   
\be \label{E:Ellipse-1}
\Phi(z)  = A \ep \left(z + \frac{Q}{z} + \sum_{ n \geq 2} \frac{B_n}{z^n} \right) :\; \BFR^2 \setminus D \rightarrow \BFR^2 \setminus K,\quad \Phi(\infty)=\infty, \;A>0, 
\ee
and $B_n \in \BFR$ due to the $S^1$-invariance. We note that $\Phi(z)$ is uniquely determined by the real part of the trace on the unit disk
\be \label{E:Ellipse-2}
\phi(\theta) = \sum_{n \geq 2} B_n \cos n \theta \in \dot{H}^s(S^1),\;\; s>\frac{3}{2}.
\ee
up to  a Hilbert transform and  
\be \label{E:A}
A = \frac{(1-Q^2)}{(1-Q^2) - \left(\sum_{n \geq 2} n B_n^2\right)} = \frac{(1-Q^2)}{(1-Q^2) - \frac{1}{2\pi}\| \phi \|^2_{\dot{H}^{\frac{1}{2}}(S^1)}}.
\ee
It follows from the fact that the area enclosed by the closed curve 
$$\p K=(x_1(\theta),x_2(\theta)) = \left({\rm Re} \Phi(e^{i\theta}), {\rm Im} \Phi(e^{i\theta}) \right), \; \theta \in S^1$$ 
is given by 
\[
\pi(1-Q^2) = |K|= \frac{1}{2} \int_{S^1} \left(x_1(\theta) x_2^\prime(\theta) -x_1^\prime(\theta) x_2(\theta) \right) d \theta.
\]  
For brevity,  $\phi(z) = \sum_{n \geq 2} \frac{B_n}{z^n}$ is also reserved for complex function unless otherwise stated. \\

If more vortical components are involved, for example,  
\[
\omega = \omega_0 \mathbf{I}_{D_0} + \omega_1(x) \mathbf{I}_{D_1} + \omega_2(x) \mathbf{I}_{D_2},
\] 
in which $\omega_0$ is a non-zero constant and each $\omega_j \in C^\infty(D_j) \cap C^{0,1}(D)$, $j=1,2$, the situation is more subtle. Firstly, let us consider the relative stream function $\psi = \Psi - \frac{\Omega}{2}|x|^2$. Similar with Lemma 2.5 in \cite{LWZ2019}, we have
\begin{lemma} \label{L:Steady-vor}
	Suppose $D_1,D_2$ are mutually disjoint and have $H^s$ boundary, $s>\frac{3}{2}$. For $j=1,2$, suppose $-\Delta \psi = f_j(\psi)$ on $D_j$ for some $f_j \in C^1$ and $\psi$ is also constant along $\p D_0$. Then $\omega$ is a uniformly rotating vorticity.
\end{lemma}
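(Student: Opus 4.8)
The plan is to read the stated hypotheses as precisely the conditions that make $\omega$ a stationary weak solution of the Euler equation in the frame co-rotating at angular velocity $\Omega$, and then to transfer this back to the profile $\omega(t,x)=\omega_0(e^{-it\Omega}x)$. Arguing as in the proof of Lemma~2.5 of \cite{LWZ2019}, $\omega$ is a uniformly rotating vorticity with angular velocity $\Omega$ if and only if the relative vorticity is transported by the relative velocity field $\nabla^\perp\psi$, that is $\nabla\cdot\bigl(\omega\,\nabla^\perp\psi\bigr)=0$ in $\mathcal D'(D)$, equivalently $\int_D\omega\,\nabla^\perp\psi\cdot\nabla\varphi\,d\mu=0$ for every $\varphi\in C_c^\infty(D)$, where $\psi=\Psi-\frac{\Omega}{2}|x|^2$; here one uses $\nabla\cdot(\nabla^\perp\psi)=0$. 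So the entire task is to verify this weak identity. As a preliminary I would record the regularity of $\psi$: since $-\Delta\Psi=\omega\in L^\infty(D)$ with $\Psi|_{S^1}=0$, elliptic estimates give $\Psi\in W^{2,p}(D)$ for all $p<\infty$, hence $\psi\in C^{1,\beta}(\overline D)$ for every $\beta\in(0,1)$; in particular $\nabla^\perp\psi$ has a well-defined continuous trace on each of the interfaces $\p D_0,\p D_1,\p D_2$, and $-\Delta\psi-\omega$ is a constant throughout $D$ (because $\Delta|x|^2$ is).

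Next, write $D=D_0\cup D_1\cup D_2\cup E$ with $E:=D\setminus\overline{D_0\cup D_1\cup D_2}$, using that the $D_j$ are pairwise disjoint and compactly contained in $D$. On $E$ one has $\omega\equiv0$; on $D_0$, $\omega\equiv\omega_0$; and on $D_j$, $j=1,2$, the constancy of $-\Delta\psi-\omega$ together with the hypothesis $-\Delta\psi=f_j(\psi)$ shows that $\omega$ equals $f_j(\psi)$ up to the same additive constant, hence is a $C^1$ function of $\psi$. Thus on each of the four pieces $\omega$ is a $C^1$ function of $\psi$, so $\nabla\omega$ is parallel to $\nabla\psi$ and $\nabla^\perp\psi\cdot\nabla\omega=0$ there, whence $\nabla\cdot(\omega\,\nabla^\perp\psi)=0$ in the interior of each piece; as $\omega\,\nabla^\perp\psi$ is moreover continuous up to each (Lipschitz) interface, applying the divergence theorem piece by piece collapses $\int_D\omega\,\nabla^\perp\psi\cdot\nabla\varphi\,d\mu$ into a sum over $j=0,1,2$ of interface integrals $\int_{\p D_j}\varphi\,[\![\omega]\!]\,(\nabla^\perp\psi\cdot\mathbf{n})\,dS$, where $[\![\omega]\!]$ is the jump of $\omega$ across $\p D_j$ (the $\p D$-contribution is absent since $\varphi$ has compact support). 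For $j=1,2$ the hypothesis that $\omega$ is Lipschitz near $\p D_j$ (i.e. $\omega_j\in C^{0,1}(D)$) forces $\omega_j=0$ on $\p D_j$, since $\omega$ vanishes on the exterior side; hence $[\![\omega]\!]=0$ and these terms drop with no further condition. For $j=0$ the jump equals $\pm\omega_0\neq0$, but here the hypothesis that $\psi$ is constant along $\p D_0$ enters: a $C^1$ function that is constant along a $C^1$ curve has gradient normal to that curve, so $\nabla\psi$ is parallel to $\mathbf{n}$ on $\p D_0$, hence $\nabla^\perp\psi$ is tangent to $\p D_0$ and $\nabla^\perp\psi\cdot\mathbf{n}=0$ there. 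Consequently every interface integral vanishes, the weak identity holds, and by the equivalence above $\omega$ is a uniformly rotating vorticity.

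The argument carries no deep difficulty; the point one must get right — and what I would call its heart — is the asymmetry between the \emph{hard} interface $\p D_0$, which bounds a genuine constant-strength patch and is therefore really required to be a level curve of $\psi$ (this is exactly the hypothesis that $\psi$ is constant along $\p D_0$), and the \emph{soft} interfaces $\p D_1,\p D_2$, across which the Lipschitz regularity of $\omega_j$ renders $\omega$ continuous, so that their boundary terms disappear automatically and no level-curve condition is needed there. The auxiliary facts — the $C^{1,\beta}$-regularity of $\psi$ supplied by $\omega\in L^\infty$, the piecewise divergence theorem, and the reading of $-\Delta\psi=f_j(\psi)$ on $D_j$ as the statement that $\omega$ is a $C^1$ function of $\psi$ there — are routine.
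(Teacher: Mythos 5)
Your argument is sound, and it is essentially the standard proof of this type of statement: the paper itself gives no proof of Lemma \ref{L:Steady-vor} (it is quoted as an analogue of Lemma 2.5 of \cite{LWZ2019}), and the argument there is of exactly the kind you give --- characterize uniform rotation by the weak identity $\int_D \omega\,\nabla^\perp\psi\cdot\nabla\varphi\,d\mu=0$ for the relative stream function $\psi$, use $\omega\in L^\infty$ to get $\psi\in C^{1,\beta}(\overline D)$, integrate by parts piece by piece, and kill the interface contributions. One caveat on scope, which your closing paragraph already half-acknowledges: the hypothesis ``$-\Delta\psi=f_j(\psi)$ on $D_j$ with $f_j\in C^1$'' does not by itself make the jump of $\omega$ across $\p D_j$ vanish, since constant $f_j$ is admissible and produces exactly the vortex-patch components that Theorem \ref{T:main-2}, the remark following it, and Subsection \ref{SS:VP} also feed into this lemma. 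Your proof covers the reading in the paragraph preceding the lemma, namely $\omega_j\mathbf{I}_{D_j}$ Lipschitz on $D$ (hence zero trace on $\p D_j$); for patch components the $\p D_j$ interface integrals must instead be annihilated by requiring $\psi$ to be constant along $\p D_1,\p D_2$ as well, which is precisely what the equations $\mathcal F_1=\mathcal F_2=0$ of Subsection \ref{SS:VP} impose. Your decomposition makes this one-line extension immediate, and it has the merit of exposing that the lemma, taken literally, needs one of these two alternatives (continuity of $\omega$ across $\p D_j$, or $\psi$ constant on $\p D_j$) at each of the outer interfaces, with the hypothesis on $\p D_0$ being the level-curve condition for the genuine patch there.
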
   
We would like to conclude the existence of uniformly rotating vorticity into a nonlinear functional problem. Since both location and shape are unknown a prior for nearly disk vortical domains, we consider the following conformal mapping parameterization.
\begin{lemma}[Lemma 2.6 in \cite{LWZ2019}] \label{L:CM}
	Let $k\ge 2$ be an integer and $f\in C^k(\overline{D}, \BFC)$  be a conformal mapping. 
	such that  $f|_{S^1} \in H^s (S^1, \BFR)$, $s >k+\frac 12$. 
	and $U = f(D)$. 
	Then there exist $\alpha \in S^1$ and $c \in \BFC \cap D$ such that $\p_z f_1(c, \alpha, 0)>0$ and $\p_z^k f_1(c, \alpha, 0) =0$, where $f_1 (c, \alpha, z) = f \big(\alpha \frac {z+c}{1+\bar c z} \big)$. 
	Moreover, if 
	$s>k+\frac 32$ and 
	a conformal mapping $f_0 \in C^1 (\overline{D})$ satisfies 
	\be \label{E:CM-A}
	\p_z^k f_0 (0)=0, \quad |\p_z^{k+1} f_0 ( 0)|  \ne k(k-1)|\p_z^{k-1} f_0 ( 0)|
	\ee
	then such $(c, \alpha)$ are unique and $C^1$ with respect to $f$ near $f_0$ in $C^1 (\overline{D})$. 
\end{lemma}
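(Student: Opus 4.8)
The plan is to treat the two assertions separately: the existence of $(c,\alpha)$ is a global statement which I would settle by a winding‑number (degree) argument, while the uniqueness and $C^1$‑dependence near $f_0$ is a direct application of the implicit function theorem. First, for existence, note that every automorphism of $D$ has the form $\phi_{c,\alpha}(z)=\alpha\frac{z+c}{1+\bar c z}$ and admits the factorization $\phi_{c,\alpha}=\phi_{\alpha c}\circ R_\alpha$, where $R_\alpha(z)=\alpha z$ and $\phi_w(z)=\frac{z+w}{1+\bar w z}$ is the automorphism with $\phi_w(0)=w$, $\phi_w'(0)=1-|w|^2>0$. Hence $f_1(c,\alpha,z)=(f\circ\phi_{\alpha c})(\alpha z)$, so that $\p_z f_1(c,\alpha,0)=\alpha(f\circ\phi_{\alpha c})'(0)$ and $\p_z^{k}f_1(c,\alpha,0)=\alpha^{k}(f\circ\phi_{\alpha c})^{(k)}(0)$. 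The phase $\alpha$ therefore enters the $k$‑th derivative only through the innocuous factor $\alpha^k$, while $(f\circ\phi_w)'(0)=f'(w)(1-|w|^2)\neq0$ for every $w\in D$; so it is enough to produce $w\in D$ with $(f\circ\phi_w)^{(k)}(0)=0$, after which one picks $\alpha\in S^1$ with $\alpha(f\circ\phi_w)'(0)>0$ and sets $c=\bar\alpha w\in D$.

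To locate $w$, I would expand by the Fa\`a di Bruno formula, $(f\circ\phi_w)^{(k)}(0)=\sum_{j=1}^{k}f^{(j)}(w)B_{k,j}\bigl(\phi_w'(0),\dots,\phi_w^{(k-j+1)}(0)\bigr)$, and use the identities $\phi_w^{(m)}(0)=(-1)^{m-1}m!\,\bar w^{\,m-1}(1-|w|^2)$ together with the homogeneity of the partial Bell polynomial $B_{k,j}$ (degree $j$ in its arguments). Each summand is then $O\bigl((1-|w|^2)^{j}\bigr)$ uniformly in $w\in D$, and the unique $j=1$ term equals $(-1)^{k-1}k!\,f'(w)\bar w^{\,k-1}(1-|w|^2)$; hence $P(w):=\bigl(k!(1-|w|^2)\bigr)^{-1}(f\circ\phi_w)^{(k)}(0)$ extends continuously to $\overline D$ with $P(w)=(-1)^{k-1}f'(w)\bar w^{\,k-1}+E(w)$ and $|E(w)|\le C_k(1-|w|^2)\|f\|_{C^k(\overline D)}$, in particular $P|_{S^1}(w)=(-1)^{k-1}f'(w)\bar w^{\,k-1}$. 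Using that $f'$ is zero‑free on $\overline D$ (automatic here: $f(D)$ has regular $C^{k,\alpha}$ boundary since $f|_{S^1}\in H^s$ with $s>k+\tfrac12$, so Kellogg--Warschawski applies; and $f$ is a small perturbation of a linear map in the applications of this lemma), $f'$ has vanishing winding along every circle $\{|w|=r\}$, $r<1$; consequently for $r$ close enough to $1$ the leading term dominates $E$ on $\{|w|=r\}$ and $P$ has winding number $-(k-1)\neq0$ there. A continuous map $\overline D\to\BFC$ whose restriction to such a circle has nonzero winding must vanish inside it, so $(f\circ\phi_{w_0})^{(k)}(0)=0$ for some $w_0\in D$, which yields the desired pair. (For $k=2$ one can bypass degree theory altogether: the equation reduces to $(\log f')'(w)=\frac{2\bar w}{1-|w|^2}$, i.e. $w$ is an interior critical point of $w\mapsto\log\bigl(|f'(w)|(1-|w|^2)\bigr)$, and such a point exists because this function tends to $-\infty$ on $\p D$.)

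For the uniqueness and $C^1$‑dependence, the base map $f_0$ is itself normalized — $\p_z^k f_0(0)=0$, $\p_z f_0(0)>0$ — so $(c,\alpha)=(0,1)$ solves the system for $f=f_0$. I would apply the implicit function theorem to $\mathcal G(f;c,\alpha):=\bigl(\p_z^k f_1(c,\alpha,0),\ \mathrm{Im}\,\p_z f_1(c,\alpha,0)\bigr)\in\BFC\times\BFR$, a square system in the three real parameters $(c,\alpha)\in D\times S^1$. Because $0$ is interior to $D$, the maps $f\mapsto\p_z^j f(0)$ are bounded linear on holomorphic functions in the $C^1(\overline D)$ topology (Cauchy's formula), so $\mathcal G$ is $C^1$ jointly and $\mathcal G(f_0;0,1)=0$. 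The one computation needed is that $D_{(c,\alpha)}\mathcal G(f_0;0,1)$ is invertible: linearizing $\phi_w$ at $w=0$ gives the variation $\delta(f_0\circ\phi_w)(z)=f_0'(z)(w-\bar w z^2)$, and comparing Taylor coefficients while using $\p_z^k f_0(0)=0$ yields $D_c[\p_z^k f_1]:c\mapsto f_0^{(k+1)}(0)c-k(k-1)f_0^{(k-1)}(0)\bar c$, $D_\alpha[\p_z^k f_1]=0$, and (writing $\alpha=e^{i\beta}$) $D_\beta[\mathrm{Im}\,\p_z f_1]=\mathrm{Im}(if_0'(0))=f_0'(0)>0$. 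The Jacobian is thus block triangular with determinant $f_0'(0)\bigl(|f_0^{(k+1)}(0)|^2-k^2(k-1)^2|f_0^{(k-1)}(0)|^2\bigr)$, nonzero precisely when $|\p_z^{k+1}f_0(0)|\neq k(k-1)|\p_z^{k-1}f_0(0)|$, i.e. under \eqref{E:CM-A}; the implicit function theorem then gives a locally unique $C^1$ map $f\mapsto(c(f),\alpha(f))$ with $\mathcal G=0$ near $(f_0;0,1)$, and, $\p_z f_1>0$ being an open condition satisfied at the base point, this is the asserted normalization.

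The main obstacle is the existence step. One has to pin down the exact $(1-|w|^2)$‑scaling of $(f\circ\phi_w)^{(k)}(0)$ as $w\to\p D$, so that after renormalizing the boundary map carries a computable nonzero winding number; the homogeneity of the Bell polynomials and the non‑vanishing of $f'$ up to the boundary are the two points to control there. By contrast the uniqueness/regularity half is a routine implicit function theorem, and it is exactly in that linearization that the quantity $k(k-1)|\p_z^{k-1}f_0(0)|$ of \eqref{E:CM-A} shows up, as the nondegeneracy condition for the linearized $k$‑th‑derivative map.
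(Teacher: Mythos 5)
You are proving a statement that this paper does not prove at all: Lemma \ref{L:CM} is imported verbatim from \cite{LWZ2019} (Lemma 2.6 there), so there is no in-paper argument to compare against and I assess your proposal on its own terms. Your reduction via the factorization $\phi_{c,\alpha}=\phi_{\alpha c}\circ R_\alpha$, and the whole uniqueness/regularity half, are correct and cleanly done: the variation $\delta(f_0\circ\phi_c)(z)=f_0'(z)(c-\bar c z^2)$ gives $D_c[\partial_z^k f_1](0,1)c=f_0^{(k+1)}(0)c-k(k-1)f_0^{(k-1)}(0)\bar c$, the $\alpha$-derivative of the $k$-th coefficient vanishes because $\partial_z^k f_0(0)=0$, and the block-triangular Jacobian has determinant $f_0'(0)\bigl(|f_0^{(k+1)}(0)|^2-k^2(k-1)^2|f_0^{(k-1)}(0)|^2\bigr)$, so \eqref{E:CM-A} is exactly the nondegeneracy needed for the implicit function theorem. (One small caveat: this yields uniqueness of $(c,\alpha)$ \emph{near} $(0,1)$; if the lemma is read as uniqueness among all admissible $(c,\alpha)\in D\times S^1$, an extra argument excluding solutions far from $(0,1)$ would still be needed.)

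The genuine gap is in the existence step for general $k$. Your winding-number argument requires the leading term $(-1)^{k-1}f'(w)\bar w^{\,k-1}$ to dominate the error $E(w)=O\bigl((1-|w|^2)\|f\|_{C^k(\overline{D})}\bigr)$ on circles $|w|=r$ near $1$, and for that you assert that $f'$ is zero-free on $\overline{D}$. This does not follow from the hypotheses: univalence gives $f'\neq 0$ only in the open disk, and smoothness of the boundary \emph{parametrization} $f|_{S^1}\in H^s$ does not make the image curve regular — the Kellogg--Warschawski appeal is circular, since that theorem deduces $f'\neq0$ on $S^1$ \emph{from} regularity of $\partial U$, which is precisely what may fail. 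A concrete admissible counterexample to your claim is $f(z)=(1+z)^2$: it is univalent on $D$, analytic on $\overline{D}$ (so in every $C^k$ and $H^s$), yet $f'(-1)=0$; near such a boundary zero one has $|f'(w)|\sim 1-r$ on $|w|=r$, i.e.\ the same order as your error bound, so the Rouch\'e-type homotopy and the degree count $-(k-1)$ are not justified, and appealing to ``$f$ is a small perturbation of a linear map in the applications'' changes the hypotheses rather than proving the lemma as stated. Note, however, that your parenthetical argument for $k=2$ — solutions of $(f\circ\phi_w)''(0)=0$ are exactly the critical points of $w\mapsto\log\bigl(|f'(w)|(1-|w|^2)\bigr)$, which attains an interior maximum because it is bounded above by $\log\|f'\|_{C^0(\overline{D})}$ and tends to $-\infty$ at $\partial D$ — is complete, uses no boundary nonvanishing of $f'$, and covers the only case ($k=2$) actually used in this paper. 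So either restrict the existence claim to $k=2$ via that variational argument, or supply a genuine proof that $|f'|$ is bounded below near $S^1$ (e.g.\ by adding a regular-boundary hypothesis) before running the degree argument for general $k$.
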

Let $k=2$. Each nearly disk vortical domain satisfying 
\[
|D_j| = \pi r_j^2, {\rm dist}(D_j) \leq 2.5 r_j
\]
would be parameterized by a local unique conformal mapping
\be 
\Gamma(z) = x + a_1r\left( z + \wt \Gamma(z) \right), \quad \wt \Gamma(z) = \sum_{n \geq 3} (A_n - iB_n)z^n, 
\ee
in which $x$ specifies the location, $\wt \Gamma(z)$ or equivalently $\beta(\theta) = \wt \Gamma(z) \big|_{S^1}$ describes the shape of domain and $a_1$ depends on $\beta$ such that the area is $\pi r^2$. See more details in Section \ref{S:Multi}.

\section{Rapidly rotating simply-connected vortex patch} \label{S:Sin-patch}

In the beginning, we focus on the simply-connected vortex patch. Suppose $0<Q<\frac{1}{2}$ is a prescribed parameter and $0<\ep \ll 1-Q$ is a small parameter. Due to \eqref{E:Rotating-VP}, we consider the nonlinear functional  
\be \label{E:Non-patch}
\mathcal{F}(\ep \phi,\ep)(\theta) : = {\rm Im} \left((2\Omega \overline{\Phi(z)} + I(\Phi(z))) \Phi^\prime(z)z\right) : X^s \rightarrow Y^{s-1} 
\ee 
with $\Omega = \frac{1-Q^2}{4 \pi \ep^2}$ where 
\begin{align*}
X^s:= \{ f \in \dot{H}^s(S^1) \;\; \big| \; f(\theta) = \sum_{n \geq 2} a_n \cos n \theta,\; a_n \in \BFR \}, \\
Y^s:= \{ f \in \dot{H}^s(S^1) \;\; \big| \; f(\theta) = \sum_{n \geq 1} b_n \sin n \theta, \; b_n \in \BFR \},
\end{align*}
due to the $S^1$-invariance. Motiviated by the Kirchhoff ellipse vortex patch, we expert to
 obtain uniformly rotating vortex patch $\omega = \frac{1}{\pi \ep^2} \mathbf{I}_K$ with prescirbed area $|K| =\pi \ep^2(1-Q^2)$ in which $\ep^{-1}K$ is not far from $\mathbf{E}_Q$.
Let 
\be
\Phi(z) = A \ep \left(z+ \frac{Q}{z} + \ep \sum_{n \geq 2} \frac{B_n}{z^n}\right),  \phi(z) = \sum_{n \geq 2} \frac{B_n}{z^n}.
\ee
The nonlinear functional \eqref{E:Non-patch} is expressed as  
\be \label{E:non-linear}
\begin{split}
	& \mathcal{F}(\ep \phi,\ep)  = \frac{A^2(1-Q^2)}{2 \pi \ep}   {\rm Im} \left( \overline{z + \frac{Q}{z} + \ep \phi(z)} \cdot ( z - \frac{Q}{z} + \ep \phi^\prime(z)z) \right)  \\
	& + \frac{A^2}{2 \pi^2 \ep} {\rm Im} \left(-i \int_{S^1} \frac{ \overline{ \xi + \frac{Q}{\xi} - z - \frac{Q}{z}} + \ep (\overline{\phi(\xi)} - \overline{\phi(z)})}{ \xi + \frac{Q}{\xi} - z - \frac{Q}{z} + \ep (\phi(\xi) - \phi(z)) } (1 - \frac{Q}{\xi^2} + \ep \phi^\prime(\xi)) d \xi \cdot ( z - \frac{Q}{z} + \ep \phi^\prime(z)z) \right) \\
	& - \frac{A^4 \ep}{2 \pi^2} {\rm Im} \left( -i \int_{S^1} \frac{|\xi + \frac{Q}{\xi} + \ep \phi(\xi) |^2}{1 - A^2 \ep^2 (\xi + \frac{Q}{\xi} + \ep \phi(\xi)) (z + \frac{Q}{z} + \ep \phi(z)) } (1 - \frac{Q}{\xi^2} + \ep \phi^\prime(\xi)) d \xi \cdot ( z - \frac{Q}{z} + \ep \phi^\prime(z)z) \right),
\end{split}
\ee
in which $A$ is uniquely determined by $\ep \phi$ due to \eqref{E:A}. Firstly, we have the regularity concerned with the functional 
\begin{lemma} \label{L:Regularity}
	Let $s>\frac{3}{2}$. Suppose $\delta>0$ is a small number and  $\mathbf{B}_{X^s,\delta}$ is a neighbourhood of $0$ in $X^s$ with $\|f\|_{X^s} <\delta$. Then the nonlinear mapping $\mathcal{F} \in C^\infty\left(\mathbf{B}_{X^s,\delta} \times [0,\frac{1}{3}),Y^{s-1}\right)$.
\end{lemma}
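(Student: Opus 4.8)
The plan is to show that each of the three summands in the expression \eqref{E:non-linear} for $\mathcal{F}$ is a smooth map from $\mathbf{B}_{X^s,\delta}\times[0,\tfrac13)$ into $Y^{s-1}$, and that the output indeed lies in $Y^{s-1}$ (i.e.\ is an odd sine series) by the $S^1$-invariance built into the construction. The first observation is that $A=A(\ep\phi)$ from \eqref{E:A} is a smooth function of $\phi\in X^s$ on the ball $\mathbf{B}_{X^s,\delta}$, uniformly for $\ep\in[0,\tfrac13)$: the denominator $(1-Q^2)-\tfrac{1}{2\pi}\|\phi\|_{\dot H^{1/2}}^2$ stays bounded away from $0$ when $\delta$ is small (since $s>\tfrac12$ controls the $\dot H^{1/2}$ norm), and $\phi\mapsto\|\phi\|_{\dot H^{1/2}}^2$ is a bounded quadratic form, hence real-analytic. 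So $A$ and all its $\phi$-derivatives are bounded on the ball.

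Next I would handle the first (algebraic) term. On $S^1$ we have $z=e^{i\theta}$, $\bar z=1/z$, so $\overline{z+Q/z+\ep\phi(z)}=1/z+Qz+\ep\overline{\phi(z)}$ and everything reduces to multiplication of trace functions. The key analytic fact is that for $s>\tfrac32>\tfrac12$, $H^{s-1}(S^1)$ is a Banach algebra (more precisely, $H^s\cdot H^{s-1}\hookrightarrow H^{s-1}$ and $H^s\cdot H^s\hookrightarrow H^s$), so products of $\phi$, $\phi'$ and their conjugates with the fixed smooth functions $z,1/z$ land in $H^{s-1}$ with norms polynomial in $\|\phi\|_{X^s}$; taking the imaginary part is bounded linear. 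Since the expression is polynomial in $\phi,\phi',\bar\phi,\overline{\phi'},A$, and in $\ep$, and $A$ is smooth in $\phi$, this term is $C^\infty$ (indeed real-analytic) jointly in $(\ep\phi,\ep)$. The parity claim — that the result is a pure sine series with no constant term — follows because replacing $\theta\mapsto-\theta$ conjugates $z$ and $\phi$, which flips the sign of the imaginary part; the absence of the zeroth mode is the rotation condition being automatically orthogonal to constants (one can also see it from \eqref{E:Steady-V} integrated over $\partial K$).

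The main obstacle is the two integral (nonlocal) terms, particularly the first one, because the integrand has the difference quotient $\frac{\overline{\xi+Q/\xi-z-Q/z}+\ep(\overline{\phi(\xi)}-\overline{\phi(z)})}{\xi+Q/\xi-z-Q/z+\ep(\phi(\xi)-\phi(z))}$, whose denominator vanishes as $\xi\to z$. The strategy here is the standard one for such Cauchy-type kernels on vortex-patch problems: write $\xi+Q/\xi-z-Q/z = (\xi-z)\bigl(1-\tfrac{Q}{\xi z}\bigr)$ and factor out $(\xi-z)$ from numerator and denominator, so that after cancellation the kernel becomes $\frac{g(\xi,\bar\xi,z,\bar z;\phi,\ep)}{h(\xi,z;\phi,\ep)}$ where $h(z,z;\dots)=1-\tfrac{Q}{z^2}+\ep\phi'(z)\neq0$ for $\delta,\ \ep$ small (since $|Q|<\tfrac12<1$ and $\phi'$ is small), and $g$ is built from $z,\bar z$, finite differences of $\phi$, and the divided difference $\tfrac{\phi(\xi)-\phi(z)}{\xi-z}$. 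One then invokes the mapping properties of the operators $\phi\mapsto$ (divided-difference integral operator) on $H^s(S^1)$ — these are smoothing/bounded operators of Cauchy-integral type, exactly as in the references \cite{Hmidi2016,Hmidi2013,Hoz2016} and \cite{LWZ2019}, where composition with the nonlinearity $1/(h)$ is handled via the algebra property plus the fact that $h$ is bounded away from zero, making $\phi\mapsto 1/h(\phi)$ smooth. The second integral term is easier: its kernel $\frac{|\xi+Q/\xi+\ep\phi(\xi)|^2}{1-A^2\ep^2(\xi+Q/\xi+\ep\phi(\xi))(z+Q/z+\ep\phi(z))}$ has no singularity at all for $\ep<\tfrac13$ and $\delta$ small, because $A^2\ep^2|\,\cdot\,|$ is bounded by a constant times $\ep^2 (1+Q+\delta)^2<1$, so the integrand is a smooth (real-analytic) function of its arguments and of $\phi$, and the integral against $d\xi$ over $S^1$ is a bounded operation into $H^{s-1}$ by differentiating under the integral sign and using the algebra property for the $z$-dependence. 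Assembling the three pieces — each $C^\infty$ in $(\ep\phi,\ep)\in\mathbf{B}_{X^s,\delta}\times[0,\tfrac13)$ with values in $Y^{s-1}$ — and using that finite sums and products of smooth maps are smooth, gives $\mathcal{F}\in C^\infty(\mathbf{B}_{X^s,\delta}\times[0,\tfrac13),Y^{s-1})$.
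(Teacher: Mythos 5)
Your argument is essentially correct, but it proves the lemma by a genuinely different route than the paper. You work directly with the explicit contour representation \eqref{E:non-linear}: smoothness of $A$ from \eqref{E:A}, the Sobolev algebra property of $H^{s-1}(S^1)$ for the algebraic term, the factorization $\xi+\tfrac{Q}{\xi}-z-\tfrac{Q}{z}=(\xi-z)\bigl(1-\tfrac{Q}{\xi z}\bigr)$ together with divided differences of $\phi$ for the singular Cauchy-type integral, and the observation that the Green's-function correction term has a nonsingular kernel because $A^2\ep^2\,|\Phi$-type factors$|<1$ for $\ep<\tfrac13$ and $\delta$ small (equivalently, the patch stays strictly inside the unit disk). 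This is the standard contour-dynamics strategy of \cite{Hmidi2016,LWZ2019}, and it works here; what it buys is an elementary, self-contained estimate scheme tied to the specific formula, at the cost of having to track the kernel structure term by term (and of relying on the cited mapping lemmas to upgrade boundedness of the divided-difference operators to $C^\infty$ dependence on $\phi$ --- note these operators are bounded, not smoothing, and the needed lower bound on your denominator $h$ must hold for all $\xi,z\in S^1$, not only on the diagonal $\xi=z$, which it does since $|1-\tfrac{Q}{\xi z}|\ge 1-Q$ and $\ep\bigl|\tfrac{\phi(\xi)-\phi(z)}{\xi-z}\bigr|\lesssim\delta$ for $s>\tfrac32$). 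The paper instead avoids the kernel analysis entirely: it identifies $\mathcal{F}$ with the tangential derivative along $\p K$ of the relative stream function, reformulates the problem as a free-boundary (transmission) elliptic problem for $\psi$ in $K$ and $D\setminus K$, and obtains smoothness in the boundary perturbation from the $C^\infty$ dependence of the Dirichlet Laplacians and Dirichlet--Neumann operators on the domain together with positivity of $\mathcal{N}_K+\mathcal{N}_{D\setminus K}$, citing the appendix of \cite{Shatah2013}; that approach is shorter, insensitive to the particular parametrization, and reused verbatim for the multi-component functional in Section \ref{S:Multi}, whereas yours gives more explicit quantitative control on each term of \eqref{E:non-linear}. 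Your parity argument ($\theta\mapsto-\theta$) for landing in the sine-series space $Y^{s-1}$ is a point the paper leaves implicit, and it is fine.
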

\begin{proof}
Let 
\[
\psi(x,t) = \Psi(x,t) - \frac{\Omega}{2}|x|^2, \quad x \in D
\]	
be the relative stream function	for $\omega = \frac{1}{\pi \ep^2}\mathbf{I}_K$. Note that $\mathcal{F}(\phi,\ep)$ is the tangential derivative of $\psi$ on the boundary of vortical domain $\p K$, namely 
\[
\mathcal{F}(\phi,\ep) = \frac{1}{|\p_z \Phi(e^{i\theta})|} \p_\theta \psi(\phi,\ep)(\theta).
\]
It is equivalent to consider the regularity of nonlinear functional $\psi(\varphi)(e^{i \theta})$ with respect to boundary variation of $K$.  To this end, let us consider the free-boundary elliptic problem
\be
\begin{cases}
		 -\Delta \psi  = \frac{1}{\pi \ep^2} \mathbf{I}_K - \frac{1-Q^2}{2\pi \ep^2}, \\
		 \psi  = f \text{ on } \p K, \\
		 \psi  = - \frac{1-Q^2}{8 \pi \ep^2} \text{ on } \p D.
\end{cases}
\ee
Clearly $\psi$ is smooth on either $K$ or $D\setminus K$. The standard elliptic regularity theorem implies $\psi \in W^{2,p}(D)$ for  $1 \leq p<\infty$, and $\psi \in C^{1,\alpha}$ due to Sobolev embedding theorem. It implies that the normal derivatives of $\psi$ from both sides should be coincided, i.e.,
	\[
	\mathbf{n} \cdot \nabla \left(\mathcal{H}_K(f) + \Delta_K^{-1}(\frac{1+Q^2}{2\pi \ep^2} )\right) + \mathbf{n} \cdot \nabla \left(\mathcal{H}_{D \setminus K}(f) -\Delta_{D\setminus K}^{-1}(\frac{1-Q^2}{2\pi \ep^2} )\right)=0
	\]
where $\Delta_S$ denotes the Dirichlet Laplacian for the domain $S$. It implies
	\[
	f = \left(\mathcal{N}_K + \mathcal{N}_{D \setminus K} \right)^{-1} \left( \mathbf{n} \cdot \nabla (\Delta_{D\setminus K}^{-1}(\frac{1-Q^2}{2\pi \ep^2}) -\Delta_K^{-1}(\frac{1+Q^2}{2\pi \ep^2} )) \right)
	\]
	hence the regularity of $\mathcal{F}(\phi,\ep)$ follows form the $C^\infty$ regularity of the Dirichlet-Neumann operator, the Laplacian with respect to the boundary variation is $C^\infty$ smooth and the operator $\mathcal{N}_K + \mathcal{N}_{D \setminus K}$ is positively definite, cf. Appendix of \cite{Shatah2013}.
\end{proof}
The other essential ingredient is the linearization of $\mathcal{F}$ at the rescaled Kirchhoff ellipse patch.  
\begin{lemma} \label{L:Lin-Elli}
Suppose $h(\theta) = \sum_{n \geq 2} B_n \cos n \theta \in X^s$. We have
\be
\mathcal{D}_{\phi}\mathcal{F}(0,0) h(\theta) = \sum_{n \geq 1} y_{n+1} \sin n \theta,
\ee
where
\be\label{E:Spec}
\begin{split}
& y_2=-\frac{1}{2 \pi} (1+Q)^2 B_2,\;\; y_3= -\frac{2}{\pi}Q^2 B_3,\\
& y_{n+1} = \frac{1}{\pi} (\frac{1-Q^2}{2}n-1-Q^n)(B_{n+1}-QB_{n-1}).
\end{split}
\ee
Moreover, it is an isomorphism for $X^s$ to $Y^{s-1}$ for
\[
\frac{1-Q^2}{2}n-1-Q^n \neq 0, \quad \forall n \geq 3.
\]
\end{lemma}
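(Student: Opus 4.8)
The plan is to identify $\mathcal{D}_\phi\mathcal{F}(0,0)$ with the linearization, at the Kirchhoff ellipse $\mathbf{E}_Q$, of the classical whole-plane uniformly-rotating-patch functional, and then to evaluate that linearization by residue calculus. By Lemma \ref{L:Regularity}, $\mathcal{F}$ is $C^\infty$ up to $\ep=0$, so $\mathcal{D}_\phi\mathcal{F}(0,0)$ is obtained by differentiating the right-hand side of \eqref{E:non-linear} in $\phi$ along $h$ at $\phi=0$ and then letting $\ep\to0$. At $\ep=0$ three simplifications occur: (i) $A=A(\ep\phi)$ in \eqref{E:A} satisfies $A=1$ and $\mathcal{D}_\phi A=0$, since $A-1=O(\ep^2\|\phi\|^2)$, so one may set $A\equiv1$; (ii) the third line of \eqref{E:non-linear}, whose prefactor vanishes as $\ep\to0$, contributes nothing -- it is exactly the disk-boundary (image) term, negligible for a highly concentrated patch; (iii) writing $W(z):=z+Q/z$, the first two lines of \eqref{E:non-linear} equal $\ep^{-1}$ times the plane rotating-patch functional
\[
\mathcal{G}(w):=\frac{1-Q^2}{2\pi}{\rm Im}\big(\overline{w(z)}\,zw'(z)\big)+\frac{1}{2\pi^2}{\rm Im}\Big(-i\!\int_{S^1}\frac{\overline{w(\xi)-w(z)}}{w(\xi)-w(z)}\,w'(\xi)\,d\xi\cdot zw'(z)\Big)
\]
evaluated at the exterior conformal map $w=W+\ep\phi$, up to a term that vanishes as $\ep\to0$. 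Since the rescaled Kirchhoff ellipse is an exact relative equilibrium of the plane at angular velocity $\frac{\lambda ab}{(a+b)^2}=\frac{1-Q^2}{4\pi}$, with $a=1+Q$, $b=1-Q$, $\lambda=1/\pi$, which is exactly $\ep^2\Omega$, one has $\mathcal{G}(W)\equiv0$; hence $\ep^{-1}\mathcal{G}(W+\ep\phi)=D\mathcal{G}(W)[\phi]+o(1)$ uniformly on bounded sets, and therefore $\mathcal{D}_\phi\mathcal{F}(0,0)=D\mathcal{G}(W)$, which is linear in $\phi$.

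To compute $D\mathcal{G}(W)[h]$ for $h(z)=\sum_{n\ge2}B_nz^{-n}$, I would first make the kernel rational. On $|z|=|\xi|=1$, using $\bar z=1/z$ and $\bar\xi=1/\xi$, one has $W(\xi)-W(z)=(\xi-z)\frac{\xi z-Q}{\xi z}$ and $\overline{W(\xi)-W(z)}=(\xi-z)\frac{Q\xi z-1}{\xi z}$, so $\frac{\overline{W(\xi)-W(z)}}{W(\xi)-W(z)}=\frac{Q\xi z-1}{\xi z-Q}$ -- rational in $\xi$ with the single interior pole $\xi=Q/z$ -- while $W'(\xi)=(\xi^2-Q)/\xi^2$ has a double pole at $\xi=0$; every base integral is then a sum of residues at $\xi=0$ and $\xi=Q/z$. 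Linearising $\mathcal{G}$ at $W$ along $h$, with $\Delta W:=W(\xi)-W(z)$ and $\Delta h:=h(\xi)-h(z)$, one uses
\[
\delta\!\Big(\frac{\overline{\Delta W}}{\Delta W}\Big)=\frac{\overline{\Delta h}}{\Delta W}-\frac{\overline{\Delta W}}{(\Delta W)^2}\,\Delta h,\qquad \delta w'(\xi)=h'(\xi),\qquad \delta\big(zw'(z)\big)=zh'(z),
\]
and, since $h$ has only a pole at $\xi=0$, each term of the resulting integrand is again rational in $\xi$ and can be integrated by residues, term by term in $n$. Adding the elementary linearisation of the local piece, $\frac{1-Q^2}{2\pi}{\rm Im}\big(\overline{h(z)}\,zW'(z)+\overline{W(z)}\,zh'(z)\big)$, and reading off imaginary parts via ${\rm Im}(z^m)=\sin m\theta$ on $z=e^{i\theta}$, the coefficient of $\sin n\theta$ is \eqref{E:Spec}. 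I expect this residue bookkeeping to be the main obstacle -- it is routine but error-prone, above all in tracking the contributions of the pole $\xi=Q/z$, which produce the $Q^n$ terms, against those of the pole $\xi=0$; a reliable internal check is that \eqref{E:Spec} collapses to the single formula $y_{n+1}=\frac1\pi\big(\tfrac{1-Q^2}{2}n-1-Q^n\big)\big(B_{n+1}-QB_{n-1}\big)$, valid for all $n\ge1$ with the convention $B_0=B_1=0$, since $\tfrac{1-Q^2}{2}-1-Q=-\tfrac{(1+Q)^2}{2}$ and $(1-Q^2)-1-Q^2=-2Q^2$.

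For the isomorphism, set $m_n:=\tfrac{1-Q^2}{2}n-1-Q^n$, so the operator factors as $\tfrac1\pi\,M\circ T$, where $T:(B_n)_{n\ge2}\mapsto(B_{n+1}-QB_{n-1})_{n\ge1}$ with $B_0=B_1=0$, and $M$ is the diagonal Fourier multiplier by $m_n$. On generating functions $T$ is multiplication of $h(z)=\sum_{n\ge2}B_nz^{-n}$ by $z-Q/z$ followed by the projection onto frequencies $\le-2$; since $z-Q/z$ does not vanish on $\{|z|\ge1\}$ (its zeros are $\pm\sqrt Q$), $T$ is bounded with bounded inverse on the relevant $\dot H^\sigma(S^1)$ spaces -- concretely, unrolling the recursion $B_{n+1}=\pi m_n^{-1}y_{n+1}+QB_{n-1}$ produces a convolution with geometrically small weights $Q^j$. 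Since $m_n=\tfrac{1-Q^2}{2}n\,(1+O(1/n))$ with $m_n\ne0$ for $n\ge3$ by hypothesis, while $m_1=-\tfrac{(1+Q)^2}{2}\ne0$ and $m_2=-2Q^2\ne0$ because $0<Q<\tfrac12$, we get $\inf_{n\ge1}|m_n|/n>0$; hence $M:\dot H^s\to\dot H^{s-1}$ and $M^{-1}:\dot H^{s-1}\to\dot H^s$ are bounded. Composing, $\mathcal{D}_\phi\mathcal{F}(0,0):X^s\to Y^{s-1}$ is an isomorphism, the cosine-to-sine parity and the index shift being built into $T$.
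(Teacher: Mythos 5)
Your proposal is correct and follows essentially the same route as the paper: set $A=1$ (its variation vanishes), discard the boundary-image term whose prefactor carries an explicit factor of $\ep$, identify $\mathcal{D}_\phi\mathcal{F}(0,0)$ with the linearization of the whole-plane rotating-patch functional at the Kirchhoff ellipse to get \eqref{E:Spec}, and invert by unrolling the two-step recursion $B_{n+1}=QB_{n-1}+\pi m_n^{-1}y_{n+1}$ with geometrically decaying weights together with the growth $m_n\sim \frac{1-Q^2}{2}n$. The only real difference is that where you propose (but do not carry out) the residue computation of the spectral formula, the paper simply quotes Proposition 2 of \cite{Hmidi2016}, so your sketch is at least as detailed as the paper's own argument.
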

\begin{proof}
Denote by $h(w) = \sum_{n \geq 2} \frac{B_n}{w^n}$. Due to \eqref{E:non-linear}, we let $\mathcal{F}(\ep \phi,\ep) = I_1 + I_2$ where
\[
\begin{split}
I_1(\ep \phi,\ep)&: = \frac{A^2(1-Q^2)}{2 \pi \ep}   {\rm Im} \left( \overline{z + \frac{Q}{z} + \ep \phi(z)} \cdot ( z - \frac{Q}{z} + \ep \phi^\prime(z)z) \right) \\
 & + \frac{A^2}{2 \pi^2 \ep} {\rm Im} \left(-i \int_{\mathbb{T}} \frac{ \overline{ \xi + \frac{Q}{\xi} - z - \frac{Q}{z}} + \ep (\overline{\phi(\xi)} - \overline{\phi(z)})}{ \xi + \frac{Q}{\xi} - z - \frac{Q}{z} + \ep (\phi(\xi) - \phi(z)) } (1 - \frac{Q}{\xi^2} + \ep \phi^\prime(\xi)) d \xi \cdot ( z - \frac{Q}{z} + \ep \phi^\prime(z)z) \right)
\end{split}
\]	
\[
\begin{split}
I_2(\ep \phi,\ep) :& =- \frac{A^4 \ep}{2 \pi^2} {\rm Im} \bigg( -i \int_{\mathbb{T}} \frac{|\xi + \frac{Q}{\xi} + \ep \phi(\xi) |^2}{1 - A^2 \ep^2 (\xi + \frac{Q}{\xi} + \ep \phi(\xi)) (z + \frac{Q}{z} + \ep \phi(z)) } (1 - \frac{Q}{\xi^2} + \ep \phi^\prime(\xi)) d \xi \\
& \cdot ( z - \frac{Q}{z} + \ep \phi^\prime(z)z) \bigg),
\end{split}
\]
in which $A(\ep \phi,\ep) = \frac{1-Q^2}{1-Q^2 - \frac{\ep^2}{2\pi} \|\phi\|^2_{\dot{H}^{\frac{1}{2}}(S^1)}}$. It is not hard to check that $\mathcal{D}_\phi A(0,0) h=0$ for any $h \in X^s$ hence it is sufficient to let $A=1$ in the rest of the proof.
On the other hand, a straightforward computation shows that  
\be
\mathcal{D}_\phi I_2(0,0)h(\theta) = 0, \quad \forall h \in X^s.
\ee
since the presense of $\ep$ in $I_2(\ep \phi,\ep)$ as linear term. Therefore it is enough to consider the linearization of $I_1(\phi,\ep)$ with respect to boundary variation. We have  
\[
\begin{split}
\mathcal{D}_\phi I_1(0,0) h(\theta) & = \frac{1-Q^2}{2 \pi} {\rm Im} \left(\overline{z+ \frac{Q}{z}} h(z) + \overline{h(z)}(z- \frac{Q}{z})\right) \\
& + \frac{1}{2 \pi} {\rm Im} \left(-i \int_{S^1} \frac{\overline{\phi(\xi)} - \overline{\phi(z)}}{ \xi + \frac{Q}{\xi} - z - \frac{Q}{z} } (1 - \frac{Q}{\xi^2} ) d\xi \cdot (z-\frac{Q}{z}) \right) \\
& + \frac{1}{2 \pi} {\rm Im} \left( i \int_{S^1} \frac{ \overline{ \xi + \frac{Q}{\xi} - z - \frac{Q}{z}} }{ (\xi + \frac{Q}{\xi} - z - \frac{Q}{z})^2} (1 - \frac{Q}{\xi^2}) (\overline{h(\xi)} - \overline{h(z)}) d \xi \cdot ( z - \frac{Q}{z}) \right) \\
& + \frac{1}{2 \pi} {\rm Im} \left( i \int_{S^1} \frac{ \overline{ \xi + \frac{Q}{\xi} - z - \frac{Q}{z}} }{ \xi + \frac{Q}{\xi} - z - \frac{Q}{z}} h^\prime(\xi) d \xi \cdot ( z - \frac{Q}{z}) \right) \\
&  + \frac{1}{2 \pi} {\rm Im} \left( i \int_{S^1} \frac{ \overline{ \xi + \frac{Q}{\xi} - z - \frac{Q}{z}} }{ \xi + \frac{Q}{\xi} - z - \frac{Q}{z}} (1-\frac{Q}{\xi^2}) d \xi \cdot h^\prime(z)z \right), \;\; \text{ where } z=e^{i \theta}
\end{split}
\]	
The formula of linearization indeed recovers from Proposition 2 of \cite{Hmidi2016} hence the spectral information \eqref{E:Spec} follows straightforwardly.  Moreover, let $f_{n+1} = \frac{\pi}{\frac{1-Q^2}{2}n-1-Q^2} y_{n+1}$. Due to \eqref{E:Spec} we have 
\[
B_{n+1} =Q^{n-2} B_3 + \sum_{0 \leq m \leq n-3} Q^{m} f_{n+1-m}.
\]
The boundedness of the inverse operator $\left(\mathcal{D}_\phi I_1(0,0)\right)^{-1} : Y^{s-1} \rightarrow X^s$ follows form
\[
\frac{1-Q^2}{2} n -1 -Q^n \neq 0, \quad  \forall n \geq 3.
\]
The proof is finsihed.
\end{proof}
Now we are going to prove Theorem \ref{T:main-1}. 
\begin{proof}[Proof of Theorem \ref{T:main-1}]
Note that one has $\mathcal{F}(0,0)=0$. Let $f(Q) = \frac{1-Q^2}{2}n-1-Q^n$. Clearly $Q=\frac{1}{2}, n=3$ is a solution of $f(Q)=0$. We note that for any $0<Q<\frac{1}{2}$, $f(Q) \neq 0$ for any $n \geq 3$. Indeed, one has
\[
\frac{1-Q^2}{2} n -1 -Q^n > \frac{3}{8} n - 1 -\left(\frac{1}{2}\right)^n > \frac{9}{8} -1 - \left(\frac{1}{2}\right)^3=0
\]
 Due to Lemma \ref{L:Lin-Elli}, we have $\mathcal{D}_\phi \mathcal{F}(0,0):X^s\rightarrow Y^{s-1}$ is an isomorphism. Applying the Implicit Function theorem on $\mathcal{F}(\ep\phi,\ep) =0$ it follows that there exists $\ep_0>0$ such that for any $0<\ep <\ep_0$, one has
\[
\mathcal{F}(\ep \phi(\ep),\ep) = 0.
\]
It implies that $\p( \ep^{-1} K)$ is an $O(\ep^2)$ perturbation of the scaled ellipse $\mathbf{E}_Q$. The proof is completed.
\end{proof}

\section{Uniformly rotating vorticity with multiple vortical domains} \label{S:Multi} 

Upon the existence of rapidly rotating simply-connected vortex patch, we continute to construct rapidly rotating vortex patch with multiple vortical conponents in this section. Suppose $0<Q<\frac{1}{2}$. As mentioned in Secton \ref{S:Intro}, we consider the vorticity field
\be \label{E:Multi-vor-1}
\omega = \frac{1}{\pi r_0^2} \mathbf{I}_{D_0} + \sum_{j=1}^2 \omega_j,
\ee
where $|D_0|= \pi r_0^2(1-Q^2)$ and the vortical component $\omega_j$ satisfies 
\begin{gather} \label{E:Multi-con}
	{\rm supp \;} \omega_j = D_j,\; \mu:=\int_{D} \omega_j>0, \; |D_j|= \pi  r_j^2, \; {\rm dist } (D_j) \leq 2.5 r_j, \; j=1,2, 
\end{gather}
in which $\vec{r}=(r_0,r_1,r_2) \in \BFR_+^{3}$ are small parameters. In particular we suppose $\max\{r_1,r_2\} <\frac{r_0^2}{2}$ therefore $|\vec{r}| :=\max\{r_0,r_1,r_2\}=r_0$. 

The stream function is given by 
\be 
\Psi(x) =\frac{1}{\pi r_0^2} \int_{D_0} G(x,y)  d \mu_y  + \sum_{j=1}^{2} \int_{D_j} G(x,y) \omega_j(y) d \mu_y.
\ee
hence the velocity follows 
\be
	u(x)  =\nabla^\perp \Psi(x) = u_o(x) + u_e(x),
\ee
in which
\[
	 u_o(x): = \frac{1}{\pi r_0^2} \int_{D_0} \nabla_x^\perp G(x,y)  d \mu(y), \quad 
	 u_e(x):=\sum_{j=1}^{2} \int_{D_j} \nabla_x^\perp G(x,y) \omega_j(y) d \mu(y).
\]
Due to  \eqref{E:Contour-F}, $u_0$ is furnished in a contour integral formula 
\be
\overline{u_o(z)} = \frac{1}{4 \pi^2 r_0^2} \int_{\p D_0} \frac{\overline{\xi} - \overline{z}}{\xi -z} d \xi - \frac{1}{4 \pi^2 r_0^2} \int_{\p D_0} \frac{|\xi|^2}{1-\overline{z} \xi } d \xi.
\ee
By Lemma \ref{L:Steady-vor}, $\omega$ is a uniformly rotating vorticity if the following conditions hold:
\begin{enumerate}
\item The relative stream function $\psi= \Psi -\frac{\Omega}{2}|x|^2$ solves $-\Delta \psi = f_j(\psi)$ in each vortical domain $D_j$ for some vorticity profiles $f_j \in C^1$, $j=1,2$. 

\item The vortex patch $\omega_0 = \frac{1}{\pi r_0^2} \mathbf{I}_{D_0}$ solves 
\be \label{E:Central-patch}
\begin{split}
& {\rm Im} \left( \left(2 \Omega \overline{z} + I(z) + 2\overline{u_e} \right) \mathbf{n}_0 \right) = 0, \quad z \in \p D_0, \\
& I(z) =  \frac{1}{2 \pi^2 r_0^2} \int_{\p D_0} \frac{\overline{\xi} - \overline{z}}{\xi -z} d \xi - \frac{1}{2 \pi^2 r_0^2} \int_{\p D_0} \frac{|\xi|^2}{1-\overline{z} \xi } d \xi,
\end{split}
\ee 
where we denote the unit outward normal to $\p D_0$ by $\mathbf{n}_0$. 
\end{enumerate} 
In the rest of this section, we shall mainly focus on the case of vortex patch, namely $\omega_j = \frac{\kappa}{\pi r_j^2} \mathbf{I}_{D_j}, j=1,2$ and sketch the proof for $C^{0,1}$ concentrated vorticity. It is reasonable since the essential aspect in the both analysis is indeed the location of the vorticity components.

 \subsection{Vortex patches} \label{SS:VP} 

Suppose that $\omega_j = \frac{\mu}{\pi r_j^2} \mathbf{I}_{D_j}, j=1,2$ are piecewisely constant vortex patches with prescribed total vorticity $\mu >0$. Since the vortical domains are the main unknowns, as mentioned in Section \ref{S:Pre}, we parameterize them by conformal mappings. 

As Section \ref{S:Sin-patch}, we parameterize the exterior domain of $D_0$ by conformal mapping $\Gamma_0$ hence the boundary $\p D_0$ is given as
\be \label{E:Phi-1}
\Gamma_0(z) = Ar_0\left( z+ \frac{Q}{z} + \sum_{n \geq 2} \frac{B_n}{z^n} \right),\quad z: =e^{i \theta} \in S^1,
\ee  
which is uniquely determined by 
\be
\beta_0(\theta) =  \sum_{n \geq 2} B_n \cos n \theta \in X^s
\ee
and $A$ depends on $\beta_0$ as \eqref{E:A}. For the vortical domains $D_j, j=1,2$, on the other hand, by letting $k=2$ in Lemma \ref{L:CM} one has the vortical domain $D_j$ is uniquely parameterized by the conformal mapping 
\[
\Gamma_j(z) = x_j + a_1^j r_j\left( z+ \wt \Gamma_j(z)\right), \quad j=1,2,
\] 
in which $\wt \Gamma_j$ describes the shape of domain $D_j$ and $x_j$ specifies the location. Moreover, due to the $S^1$-invariance, we have $\wt \Gamma_j(z) = \sum_{n \geq 3} A_n^j z^n, \; A_n^j \in \BFR$ which is uniquely determined by
\be
\beta_j(\theta) \in Z_e^s:=\left\{ f (\theta) = \sum_{ n \geq 3} A_n \cos n \theta \in \dot{H}^s(S^1) \right\}, 
\ee  
and suppose
\be
x_1 = \left(1 - \frac{2\mu \pi}{1-Q^2} r_0^2(1+ r_0^2 y_1) \right), \quad x_2 = -\left(1 -  \frac{2\mu \pi}{1-Q^2}r_0^2(1+ r_0^2 y_2) \right).
\ee 
since they would be rather close to the boundary. Suppose 
\[
W_o^s:=\left\{ f(\theta) = \sum_{ n \geq 2} A_n \sin n \theta \in H^s(S^1) \right\}.
\]
Let $\vec{\beta} := (\beta_0,\beta_1,\beta_2)$, $Y:=(y_1, y_2)$, $\vec{r}=(r_0,r_1,r_2)$, $M_{\vec{r}} \beta= (r_0\beta_0,r_1\beta_1,r_2\beta_2)$. Motiviated by Lemma \ref{L:Steady-vor} and \eqref{E:Central-patch}, we consider the nonlinear functional 
\[
\mathcal{F}\left( \vec{\beta},Y,\vec{r} \right)(\theta) := \left(\mathcal{F}_0(\theta), \mathcal{F}_1(\theta),\mathcal{F}_2(\theta) \right):  X^s \times (Z_e^s)^2 \times \BFR_+^2 \times \BFR_+^{3} \rightarrow Y^{s-1} \times (W_o^{s-1})^2,
\]
where 
\be
\begin{split}
	& \mathcal{F}_0(M_{\vec{r}}\beta,Y,\vec{r})(\theta)  := {\rm Im} \left( \left(\Omega \overline{\Gamma_0(z)} + \frac{1}{2} I(\Gamma_0(z)) \right) \frac{z\p_z \Gamma_0(z)}{|\p_z \Gamma_0(z)|} \right) \\
	& + \frac{1}{|\p_z \Gamma_0(z)|} \p_\theta \left( \sum_{j=1}^{2} \int_D \frac{\mu}{\pi r_j^2} G\left(\Gamma_0(z),\Gamma_j(y)\right) |\p_z \Gamma_j(y)|^2 d \mu(y) \right), \quad z=e^{i \theta} \in S^1,  
\end{split}
\ee
and 
\be 
\begin{split}
	& \mathcal{F}_j(M_{\vec{r}}  \beta,Y,\vec{r})(\theta): = \\
	&  -{\rm Im}\left( \frac{1}{4\pi^2 r_0^2 i} \frac{z\p_z \Gamma_j(z)}{|\p_z \Gamma_j(z)|} \left( \int_{S^1} \frac{\overline{\Gamma_0(\xi) -\Gamma_j(z)}}{\Gamma_0(\xi) -\Gamma_j(z)} \p_z \Gamma_0(\xi) d \xi - \int_{S^1} \frac{|\Gamma_0(\xi)|^2}{1-\overline{\Gamma_j(z)} \Gamma_0(\xi)} \p_z \Gamma_0(\xi) d \xi \right) \right) \\
	&  + \frac{1}{|\p_z \Gamma_j(z)|}  \p_\theta \left( \sum_{k=1}^{2} \frac{\mu}{\pi r_k^2} \int_{D} G(\Gamma_j(z),\Gamma_k(y)) |\p_z \Gamma_k(y)|^2 d \mu_y - \frac{\Omega}{2} |\Gamma_j(z)|^2 \right), \quad z = e^{i \theta} \in S^1, j=1,2.
\end{split}
\ee
Similar with Lemma \ref{L:Regularity}, we have 
\begin{lemma} 
There exists small number $\delta>0$ such that the nonlinear functional $\mathcal{F}(\vec{\beta},X,;\vec{r}): B_{X^s \times (Z_e^s)^2,\delta} \times \BFR_+^2 \times [0,\frac{1}{3})^{3} \rightarrow Y^{s-1} \times (W_o^{s-1})^2$ is a $C^\infty$ mapping.
\end{lemma}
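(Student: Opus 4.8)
My plan is to mimic the proof of Lemma~\ref{L:Regularity}, reducing the $C^\infty$ regularity of $\CF$ to the smooth dependence, on the parameters $(\vec\beta,Y,\vr)$, of the relative stream function $\psi=\Psi-\frac{\Omega}{2}|x|^2$ (and its gradient) restricted to the three boundaries $\p D_0,\p D_1,\p D_2$. First I would record, exactly as in Lemma~\ref{L:Regularity} and using \eqref{E:Contour-F} to rewrite the contribution of the patch $\omega_0=\frac{1}{\pi r_0^2}\mathbf I_{D_0}$ as a contour integral, that each component is the normalised tangential derivative $\CF_i=|\p_z\Gamma_i(e^{i\theta})|^{-1}\p_\theta\big(\psi\circ\Gamma_i\big)(\theta)$ of $\psi$ along $\p D_i$. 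Since $\p_\theta$ costs exactly one derivative and $s-1>\tfrac12$ makes $H^{s-1}(S^1)$ a multiplicative algebra (so division by the smooth non-vanishing weight $|\p_z\Gamma_i|$ is harmless), it suffices to prove that $(\vec\beta,Y,\vr)\mapsto(\psi\circ\Gamma_0,\psi\circ\Gamma_1,\psi\circ\Gamma_2)$ is $C^\infty$ into $H^s(S^1)^3$; the restriction of the target to the prescribed cosine/sine Fourier modes is then automatic from the $S^1$-symmetry built into the conformal ansatz. By linearity I would split $\Psi=\Psi_0+\Psi_1+\Psi_2$ with $-\Delta\Psi_i=\omega_i$, $\Psi_i|_{\p D}=0$, and treat separately the mutual-interaction pieces $\Psi_j\circ\Gamma_i$ with $j\ne i$, the self-interaction pieces $\Psi_i\circ\Gamma_i$, and the explicit polynomial term $-\frac{\Omega}{2}|\Gamma_i|^2$.

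The routine part is the polynomial term together with the mutual interactions. The term $-\frac{\Omega}{2}|\Gamma_i(z)|^2$ is a fixed polynomial in the conformal maps, hence real-analytic in $(\vec\beta,Y,\vr)$ with values in $H^s(S^1)$. For $j\ne i$, $\Psi_j\circ\Gamma_i$ is obtained by integrating $G_D(\Gamma_i(z),\cdot)$ against $\omega_j$ over $D_j$, pulled back to the reference circle through $\Gamma_j$; the point is that on the parameter range in question $\overline{D_i}$ and $\overline{D_j}$ are disjoint and both lie at positive distance from $\p D$, so neither the diagonal singularity of $-\frac{1}{2\pi}\log|x-y|$ nor the reflected singularity of $\frac{1}{2\pi}\log|\,|y|(x-y/|y|^2)\,|$ in \eqref{E:Green-f} is active. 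Consequently $G_D(\Gamma_i(z),\Gamma_j(w))$ and all its $\Gamma_i$-derivatives are bounded together with all their derivatives in the parameters, since $(\vec\beta,Y,\vr)\mapsto\Gamma_i$ is $C^\infty$ — indeed analytic — into $C^k(\overline D)$ by the conformal parametrisation and Lemma~\ref{L:CM}; differentiation under the integral sign and the standard composition and product estimates in $H^s(S^1)$, $s>\tfrac32$, then yield a $C^\infty$ map. Here one of course shrinks $\delta$ and the admissible range of $\vr$ if necessary so that the three closures $\overline{D_0},\overline{D_1},\overline{D_2}$ stay pairwise disjoint and compactly contained in $D$; this is precisely where the scale separation between the $r_j$ and $r_0^2$ enters.

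The crux is the self-interaction pieces $\psi_i\circ\Gamma_i$. For $i=0$ this is — up to the smooth conformal trace factor — exactly the functional analysed in Lemma~\ref{L:Regularity}, with $D_0$ in place of $K$; for $i=1,2$ it is the analogous quantity for a nearly-round domain sitting close to $\p D$. For each $i$ I would repeat the Lemma~\ref{L:Regularity} argument: view $\psi_i$ as the solution of the free-boundary transmission problem $-\Delta\psi_i=\omega_i-(\text{const})$ on $D_i\cup(D\setminus D_i)$ with prescribed constant boundary data on $\p D$ and on $\p D_i$, impose continuity of the normal derivative across $\p D_i$, and solve for the trace on $\p D_i$ in the form $f_i=(\CN_{D_i}+\CN_{D\setminus D_i})^{-1}\big(\mathbf n\cdot\nabla(\cdots)\big)$, with $\CN_S$ the Dirichlet--Neumann operator of $S$. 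The $C^\infty$ regularity then follows from the $C^\infty$ dependence of the Dirichlet--Neumann operators and of the Poisson/Newtonian solution operators on the boundary parametrisation (the Appendix of \cite{Shatah2013}, as in Lemma~\ref{L:Regularity}), the positive-definiteness and hence bounded invertibility of $\CN_{D_i}+\CN_{D\setminus D_i}$, and the smoothness of $(\vec\beta,Y,\vr)\mapsto\Gamma_i$.

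I expect the main obstacle to be the \emph{uniformity} of this last step as the small domains $D_1,D_2$ degenerate toward $\p D$: one must ensure that pushing them against the boundary neither destroys the ellipticity of $\CN_{D_j}+\CN_{D\setminus D_j}$ nor spoils the boundedness of the associated solution operators, and that the bounds are locally uniform in $(\vec\beta,Y,\vr)$. This is the role of the rescaled parametrisation $M_{\vr}\beta$: after the $r_j$-rescaling each $D_j$ becomes a small $H^s$-perturbation of a fixed round disk sitting at distance $\gtrsim r_0^2/r_j$ from its own reflection across the (rescaled) boundary, so under the scale separation $r_j<r_0^2/2$ the relevant geometry is uniformly non-degenerate and the estimates become uniform, at the cost — as already noted — of possibly shrinking the parameter neighbourhood. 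Everything else, including tracking the cosine/sine Fourier structure and the single-derivative loss into $Y^{s-1}\times(W_o^{s-1})^2$, is bookkeeping of the kind already carried out in Sections~\ref{S:Pre}--\ref{S:Sin-patch}.
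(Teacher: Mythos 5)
Your overall reduction --- writing each component as $|\p_z\Gamma_i|^{-1}\p_\theta\big(\psi\circ\Gamma_i\big)$ and invoking the smooth dependence of the transmission problem and Dirichlet--Neumann operators on the boundary parametrization --- is the route the paper intends (it gives no separate proof, asserting only that the lemma is ``similar with'' Lemma~\ref{L:Regularity}). But there is a genuine gap in your decomposition. The lemma asserts $C^\infty$ regularity on a parameter domain containing $\vec{r}=0$, and that is exactly what is used afterwards: the implicit function theorem is applied at $(\vec{\beta},Y,\vec{r})=(0,0,0)$. On that domain your splitting into mutual interactions, self-interactions, and ``the explicit polynomial term $-\frac{\Omega}{2}|\Gamma_i|^2$'' cannot work, because $\Omega=\frac{1-Q^2}{4\pi r_0^2}+\frac{\mu}{4\pi}$ blows up as $r_0\to 0$: the rotation term is not real-analytic in $(\vec{\beta},Y,\vec{r})$, it diverges like $r_0^{-2}$ in $\mathcal{F}_1,\mathcal{F}_2$ (since $|\p_z\Gamma_j|^{-1}\p_\theta\big(\tfrac{\Omega}{2}|\Gamma_j|^2\big)\approx \Omega\,|x_j|$) and like $r_0^{-1}$ in $\mathcal{F}_0$. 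The self-interaction pieces it must balance are equally singular: for $j=1,2$ the Robin-function contribution behaves like $\frac{\mu}{\pi(1-|x_j|^2)}\sim r_0^{-2}$, and for $j=0$ the contour-integral self-interaction is $O(r_0^{-1})$. Only the combination is finite at $\vec{r}=0$, and this finiteness is precisely what the ansatz $x_j=\pm\big(1-\frac{2\mu\pi}{1-Q^2}r_0^2(1+r_0^2y_j)\big)$ and the choice of $\Omega$ are engineered to produce; it is the cancellation computed in Lemma~\ref{L:Point}, and, for the $j=0$ component, the Kirchhoff-ellipse balance underlying Section~\ref{S:Sin-patch}. A proof of the regularity lemma must therefore keep the rotation term grouped with the corresponding self-interaction term and expand the disk Green's function near the boundary (resp.\ near the origin) so that the cancellation is exhibited to all orders in $r_0$, rather than proving smoothness of the pieces separately; your ``uniform non-degeneracy after rescaling'' remark addresses invertibility of $\mathcal{N}_{D_j}+\mathcal{N}_{D\setminus D_j}$, not this divergence.

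A secondary inaccuracy: you justify smoothness of the mutual-interaction terms by shrinking parameters so that $\overline{D_0},\overline{D_1},\overline{D_2}$ are ``compactly contained in $D$.'' That is not available here: by construction $D_1,D_2$ approach $\p D$ as $r_0\to 0$, and at $\vec{r}=0$ their limiting centers $(\pm1,0)$ lie on $\p D$. What is true, and what you should say instead, is that for distinct components neither singular set of $G_D$ (the diagonal $y=x$ nor the reflected set $x=y/|y|^2$) is approached, because the components sit near the origin and near two antipodal boundary points; with that replacement your treatment of the mutual terms goes through.
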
   
Denote by
\begin{gather*}
h_0(r_0 \beta_0) :={\rm Im} \left( \left(\Omega \overline{\Gamma_0(z)} + \frac{1}{2} I(\Gamma_0(z)) \right) \frac{z\p_z \Gamma_0(z)}{|\p_z \Gamma_0(z)|} \right), \\
H_0(\vec{\beta},Y,\vec{r})=R_0(M_{\vec{r}}\beta,Y,\vec{r}): =\frac{1}{|\p_z \Gamma_0(z)|} \p_\theta \left( \sum_{j=1}^{2} \frac{\mu}{\pi r_j^2}\int_D G(\Gamma_0(z),\Gamma_j(y)) |\p_z \Gamma_j(y)|^2 d \mu(y) \right), \\
h(r \beta)(\theta):= -\frac{\mu}{2\pi^2} \p_\theta \int_{D} \log \left| \wt \Gamma(z) - \wt \Gamma(y) \right| |1 + \p_z \wt \Gamma(z)|^2 d \mu_z, \\
H_j(\vec{\beta},Y,\vec{r})=R_j(M_{\vec{r}}\beta,Y,\vec{r}) : = -{\rm Im}\bigg( \frac{1}{4\pi^2 r_0^2 i} \frac{z\p_z \Gamma_j(z)}{|\p_z \Gamma_j(z)|} \big( \int_{S^1} \frac{\overline{\Gamma_0(\xi) -\Gamma_j(z)}}{\Gamma_0(\xi) -\Gamma_j(z)} \p_z \Gamma_0(\xi) d \xi \\
- \int_{S^1} \frac{|\Gamma_0(\xi)|^2}{1-\overline{\Gamma_j(z)} \Gamma_0(\xi)} \p_z \Gamma_0(\xi) d \xi \big) \bigg) \\
+  \frac{1}{|\p_z \Gamma_j(z)|}  \frac{\mu}{\pi r_j^2} \p_\theta \left( \int_{D}  g(\Gamma_j(z),\Gamma_j(y)) |\p_z \Gamma_j(y)|^2 d \mu_y - \frac{\Omega}{2} |\Gamma_j(z)|^2 \right) \\
+  \frac{1}{|\p_z \Gamma_j(z)|} \frac{\mu}{\pi r_j^2}  \p_\theta \left( \int_{D} G(\Gamma_j(z),\Gamma_k(y)) |\p_z \Gamma_k(y)|^2 d \mu_y \right),\; j=1,2, k \neq j.
\end{gather*}
By Lemma \ref{L:Lin-Elli}, one has $\mathcal{D}h_0(0)$ is an isomorphism from $X^s$ to $Y^{s-1}$ provided $0<Q<\frac{1}{2}$. On the other hand, since the variation of $|1+ \p_z \wt \Gamma_j(z)|$ depends on $r_j$ linearly, due to Lemma 3.1 of \cite{LWZ2019}, one has 
\begin{lemma} \label{L:L-vp-1}
It holds that $h \in C^\infty\left(B_{Z_e^s,\delta}, W_o^{s-1}\right)$ for any $s>\frac{3}{2}$, $h(0)=0$ and 
\be
\mathcal{D}h(0) \left( \sum_{n \geq 3} A_n \cos n \theta\right) = -\sum_{n \geq 2} \frac{n-1}{2 \pi} A_{n+1} \sin n\theta.
\ee
\end{lemma}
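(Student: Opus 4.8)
The plan is to recognize $h$ as precisely the self-interaction functional of a single nearly-round vortex patch, for which the analysis of Lemma~3.1 in \cite{LWZ2019} applies essentially verbatim. Writing $z=e^{i\theta}$ and $\gamma(z):=\wt\Gamma(z)=\sum_{n\ge 3}A_nz^n$, the quantity $-\frac{\mu}{2\pi^2}\int_D\log|z-y+\gamma(z)-\gamma(y)|\,|1+\gamma'(y)|^2\,d\mu(y)$ is, up to the $\theta$-independent constant $\log(a_1 r)$ which $\p_\theta$ annihilates, the trace on $\p D_j$ of the stream function generated by $\omega_j$ itself, transported to $S^1$ by the conformal map $\Gamma_j$. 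I would then establish, in order, the $C^\infty$ regularity, the vanishing $h(0)=0$, and the first variation $\mathcal{D}h(0)$.

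For regularity I would split the kernel as $\log|z-y+\gamma(z)-\gamma(y)|=\log|z-y|+\log\big|1+\tfrac{\gamma(z)-\gamma(y)}{z-y}\big|$ and use that the divided difference $\tfrac{\gamma(z)-\gamma(y)}{z-y}$ is holomorphic in $z$ and in $y$ and, with all its derivatives, bounded on a neighbourhood of $S^1\times\overline D$ by $C\|\gamma\|_{H^s}$; hence for $\|\gamma\|_{Z_e^s}<\delta$ its value stays in a fixed disk about $1$ where $\zeta\mapsto\log|\zeta|$ is real-analytic, so $\gamma\mapsto\log|1+\tfrac{\gamma(z)-\gamma(y)}{z-y}|$ is $C^\infty$ with values in functions smooth in $(\theta,y)$. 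The residual kernel $\log|z-y|$ is fixed, and its area integral against an $H^{s-1}$ density smooths on $S^1$. Multiplying by the Jacobian weight $|1+\gamma'(y)|^2$, applying $\p_\theta$, and dividing by $|\p_z\Gamma_j(e^{i\theta})|$ and $a_1$ — all $C^\infty$ in $\gamma$ and bounded below near $\gamma=0$ — preserves $C^\infty$ dependence into $H^{s-1}(S^1)$, using that $H^{s-1}(S^1)$ is a Banach algebra for $s>\tfrac32$. The target is the sine space $W_o^{s-1}$ because the kernel is invariant under the simultaneous reflection $\theta\mapsto-\theta$, $y\mapsto\bar y$ (note $\gamma(\bar z)=\overline{\gamma(z)}$), which makes the integral even in $\theta$ and hence $\p_\theta$ of it odd; the absence of the first Fourier mode is forced by $\gamma$ starting at $z^3$ and can be read off the Fourier computation below. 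The vanishing $h(0)=0$ is then immediate since at $\gamma\equiv0$ the integral is $\int_D\log|e^{i\theta}-y|\,d\mu(y)$, which is rotation invariant in $\theta$ hence constant (in fact $\equiv0$), so $\p_\theta$ kills it.

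For $\mathcal{D}h(0)$ I would take $\gamma=t\sum_{n\ge3}A_nz^n$ and expand to first order: $\log|1+tQ|=t\,{\rm Re}\,Q+O(t^2)$ with $Q=\tfrac{\gamma(z)-\gamma(y)}{z-y}$, and $|1+t\gamma'(y)|^2=1+2t\,{\rm Re}\,\gamma'(y)+O(t^2)$, while $a_1=1+O(t^2)$ and $|\p_z\Gamma_j|^{-1}$ is constant at $t=0$; because the unperturbed integral is $\theta$-constant, its product with these first variations is annihilated by $\p_\theta$. Thus $\mathcal{D}h(0)$ is $-\frac{\mu}{2\pi^2}\p_\theta$ applied to $\int_D\big({\rm Re}\,\tfrac{\gamma(z)-\gamma(y)}{z-y}+2\log|z-y|\,{\rm Re}\,\gamma'(y)\big)\,d\mu(y)$. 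Working mode by mode with $\gamma(z)=A_nz^n$, I would use $\tfrac{z^n-y^n}{z-y}=\sum_{k=0}^{n-1}z^ky^{n-1-k}$, the moments $\int_D y^a\bar y^b\,d\mu(y)=\tfrac{\pi}{a+1}\delta_{ab}$, and $\log|e^{i\theta}-y|=-\sum_{k\ge1}\tfrac1k{\rm Re}(e^{-ik\theta}y^k)$ for $|y|<1$; only finitely many terms survive and each of the two pieces is a multiple of $\cos((n-1)\theta)$. Combining them and applying $-\frac{\mu}{2\pi^2}\p_\theta$ produces a multiple of $\sin((n-1)\theta)$ with coefficient proportional to $(n-1)$; re-indexing $n\mapsto n+1$ so the image lies in $W_o^{s-1}$ gives the eigenvalue $-\tfrac{n-1}{2\pi}$ on $\sin n\theta$ as stated, with the constant as recorded in Lemma~3.1 of \cite{LWZ2019}, and summing over $n$ yields the displayed formula.

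The regularity bookkeeping is routine; the delicate point is the linearization, where $\p_\theta$ must not be pushed naively inside the area integral, since $\p_\theta\log|e^{i\theta}-y|\sim|z-y|^{-1}$ is not area-integrable near $z$ — so the safe route is to compute the (finite, per mode) $\theta$-dependence of the integral first and differentiate afterwards — and where one must verify that the divided-difference contribution and the Jacobian contribution combine to the single factor $(n-1)$ rather than leaving two unmatched constants.
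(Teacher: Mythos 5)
Your reading of the (somewhat garbled) definition of $h$ --- boundary point $z=e^{i\theta}$, integration over $y\in D$, Jacobian at $y$, the factor $\log(a_1r)$ killed by $\p_\theta$ --- is the right one, and your overall route is sound. Note, however, that the paper offers no argument for this lemma at all: it simply invokes Lemma~3.1 of \cite{LWZ2019}. So what you propose is a self-contained verification rather than a retracing of the paper, which is welcome. The regularity and $h(0)=0$ parts are fine as sketched (one small quibble: $\p_\theta\log|e^{i\theta}-y|$, of size $|z-y|^{-1}$, \emph{is} area-integrable in two dimensions, so differentiating once under the integral sign is actually legitimate; your ``compute the $\theta$-dependence first, differentiate after'' route is of course also valid).

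The one place you must not stop short is the final combination, which you assert ``as stated'' instead of computing --- and you yourself flagged it as the delicate point. Carrying out your own scheme: for $\gamma(w)=A_nw^n$ the divided-difference term gives $\int_D\mathrm{Re}\,\frac{\gamma(z)-\gamma(y)}{z-y}\,d\mu(y)=\pi A_n\cos((n-1)\theta)$, while the Jacobian term gives $2nA_n\int_D\log|z-y|\,\mathrm{Re}(y^{n-1})\,d\mu(y)=-\frac{\pi A_n}{n-1}\cos((n-1)\theta)$; the sum is $\pi A_n\frac{n-2}{n-1}\cos((n-1)\theta)$, and applying $-\frac{\mu}{2\pi^2}\p_\theta$ yields $\frac{\mu(n-2)}{2\pi}A_n\sin((n-1)\theta)$, i.e.\ $\sum_{n\ge 2}\frac{\mu(n-1)}{2\pi}A_{n+1}\sin n\theta$ after reindexing. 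This confirms the diagonal structure, the mode shift, the absence of the $\sin\theta$ mode, and the magnitude $\frac{n-1}{2\pi}$ --- which is all that is used later for invertibility --- but it comes out with the opposite sign to the lemma's display and with the factor $\mu$ that the display omits; with the prefactor $-\frac{\mu}{2\pi^2}$ in the paper's definition of $h$, the stated sign and the missing $\mu$ appear to be typos in the paper. Your phrase ``gives the eigenvalue $-\frac{n-1}{2\pi}$ as stated, with the constant as recorded in Lemma~3.1 of \cite{LWZ2019}'' papers over exactly this bookkeeping: finish the two-line mode computation explicitly rather than deferring to the cited lemma, and record the sign/$\mu$ discrepancy so that the reader can see it is harmless for the isomorphism argument.
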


On the other hand, we have 
\begin{lemma} \label{L:Point}
$H_j(\vec{\beta}, Y,\vec{r}) \in C^\infty\left(B_{X^s \times (Z_e^s)^2,\delta} \times \BFR_+^2 \times [0,\frac{1}{3})^{3}, Y^{s-1} \times (W_o^{s-1})^2 \right),\;j=0,1,2$ and 
\[
H_0(\vec{\beta},Y,0):=\lim_{|\vec{r}| \rightarrow 0} R_0(M_{\vec{r}}\beta, Y,\vec{r}) =0, \quad 
H_j(\vec{\beta},Y,0):=\lim_{|\vec{r}| \rightarrow 0} R_j(M_{\vec{r}}\beta, Y,\vec{r}) = \frac{1-Q^2}{4 \pi^2 \mu} y_j \sin \theta, \;\; j=1,2.
\]
Moreover, denote by $\vec{H} = (H_1,H_2)^t$, one has $\mathcal{D}_Y \vec{H}(0,0,0)$ is invertible. 
\end{lemma}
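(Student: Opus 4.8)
The plan is to treat the three assertions — the $C^\infty$ regularity, the explicit limits as $|\vr|\to 0$, and the invertibility of $\mathcal{D}_Y\vec{H}(0,0,0)$ — in that order, the first two being preparatory and the third being the crux. For the regularity, I would argue exactly as in Lemma~\ref{L:Regularity}: each $R_j$ is, up to the smooth factor $|\p_z\Gamma_j|^{-1}$ and the tangential derivative $\p_\theta$, a sum of terms built from (i) contour integrals of Cauchy-type kernels $\frac{\overline{\Gamma_0(\xi)-\Gamma_j(z)}}{\Gamma_0(\xi)-\Gamma_j(z)}$ and Poisson-type kernels $\frac{|\Gamma_0(\xi)|^2}{1-\overline{\Gamma_j(z)}\Gamma_0(\xi)}$, and (ii) Green's-function interactions $\int_D G(\Gamma_j(z),\Gamma_k(y))|\p_z\Gamma_k(y)|^2\,d\mu_y$. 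Since $D_0,D_1,D_2$ are mutually disjoint and $\Gamma_j(S^1)$ stays in the interior of $D$ uniformly in the parameter range (here the constraint $\max\{r_1,r_2\}<r_0^2/2$, encoded in the $\frac{2\mu\pi}{1-Q^2}r_0^2(1+r_0^2 y_j)$-form of $x_j$, keeps $D_j$ away from $\p D$), all these kernels are smooth in $(z,\xi)$ and depend smoothly (indeed analytically) on the conformal data; composition with the smooth Nemytskii maps and with $\p_\theta$ preserves the $H^{s-1}$ target, giving $H_j\in C^\infty$ on the stated domain, including the closed endpoint $\vr=0$ once one checks the limits exist. The $S^1$-symmetry places $H_0$ in $Y^{s-1}$ and $H_j$ ($j=1,2$) in $W_o^{s-1}$, as in the setup.

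Next I would compute the $|\vr|\to 0$ limits. For $H_0$: the mutual-interaction term $\frac{1}{|\p_z\Gamma_0|}\p_\theta\big(\sum_j\frac{\mu}{\pi r_j^2}\int_D G(\Gamma_0(z),\Gamma_j(y))|\p_z\Gamma_j(y)|^2\,d\mu_y\big)$ has each inner integral over a region of area $\pi r_j^2$, so $\frac{\mu}{\pi r_j^2}\int_{D_j}G(\Gamma_0(z),y)\,d\mu_y\to \mu\, G(\Gamma_0(z),x_j)$ as $r_j\to 0$, and since $x_1,x_2\to\p D$ with $\Gamma_0(z)$ bounded away from $\p D$, the map $z\mapsto G(\Gamma_0(z),x_j)$ becomes constant in the limit (its $\theta$-derivative vanishes); hence $H_0(\vec\beta,Y,0)=0$. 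For $H_j$, $j=1,2$: the contour-integral block is $-\mathrm{Im}\big(\frac{1}{4\pi^2 r_0^2 i}\frac{z\p_z\Gamma_j}{|\p_z\Gamma_j|}(\,\cdots)\big)$; expanding $\Gamma_0(\xi)=O(r_0)$ and $\Gamma_j(z)=x_j+O(r_j)$ with $x_j$ at distance $\frac{2\mu\pi}{1-Q^2}r_0^2$ from $\p D$, the dominant piece comes from the image-charge (Poisson) integral $\int_{S^1}\frac{|\Gamma_0(\xi)|^2}{1-\overline{\Gamma_j(z)}\Gamma_0(\xi)}\p_z\Gamma_0(\xi)\,d\xi$, which after dividing by $r_0^2$ and sending the scales to zero reduces to a residue computation producing precisely the linear-in-$y_j$ term $\frac{1-Q^2}{4\pi^2\mu}y_j\sin\theta$; the self-interaction Robin-function term $\frac{\mu}{\pi r_j^2}\p_\theta\int_D g(\Gamma_j(z),\Gamma_j(y))\cdots$ and the centrifugal term $-\frac{\Omega}{2}|\Gamma_j(z)|^2$ both vanish in the limit because $g$ is smooth near the limiting point and $|\Gamma_j(z)|^2\to|x_j|^2$ is $\theta$-independent, while the $k\neq j$ cross term also dies for the same disjoint-support reason as in $H_0$. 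This is the computation I expect to be the most delicate bookkeeping, since one must track which of several competing small scales ($r_0^2$, $r_j$, $d(D_j,\p D)$) contributes at leading order — the hypothesis $r_j<r_0^2/2$ is exactly what guarantees the boundary image term dominates.

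Finally, the invertibility of $\mathcal{D}_Y\vec{H}(0,0,0)$ is immediate from the limit formula just obtained: since $H_j(\vec\beta,Y,0)=\frac{1-Q^2}{4\pi^2\mu}y_j\sin\theta$ depends on $Y=(y_1,y_2)$ only through $y_j$ (no cross-coupling between the two components in the limit), the derivative $\mathcal{D}_Y\vec H(0,0,0)$ acts on $(\delta y_1,\delta y_2)$ by $\big(\frac{1-Q^2}{4\pi^2\mu}\delta y_1\sin\theta,\ \frac{1-Q^2}{4\pi^2\mu}\delta y_2\sin\theta\big)$, i.e.\ it is $\frac{1-Q^2}{4\pi^2\mu}$ times the identity embedding $\BFR^2\hookrightarrow (W_o^{s-1})^2$ onto the $\sin\theta$-modes; since $\frac{1-Q^2}{4\pi^2\mu}\neq 0$ for $0<Q<\frac12$ and $\mu>0$, this is a bounded injective map with bounded left inverse, which is what is meant by ``invertible'' in the sense needed for the implicit-function argument that follows. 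The only point requiring care is to confirm that the off-diagonal $\p_{y_k}H_j$ ($k\neq j$) genuinely vanish at $\vr=0$: this follows because the only place $y_k$ enters $H_j$ is through the $k\neq j$ cross-interaction term $\frac{\mu}{\pi r_j^2}\p_\theta\int_D G(\Gamma_j(z),\Gamma_k(y))|\p_z\Gamma_k(y)|^2\,d\mu_y$, whose $\theta$-derivative tends to zero as established above, so differentiating in $y_k$ and then letting $\vr\to 0$ gives $0$.
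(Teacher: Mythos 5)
Your regularity paragraph and your treatment of $H_0$ are broadly in the paper's spirit (for $H_0$ you should still record the quantitative reason the limit is $0$: the prefactor $1/|\p_z\Gamma_0|\sim (Ar_0)^{-1}$ is exactly compensated because $\p_\theta\big[G(\Gamma_0(e^{i\theta}),x_j)\big]=\nabla_1G(\Gamma_0,x_j)\cdot\p_\theta\Gamma_0$ with $|\p_\theta\Gamma_0|=|\p_z\Gamma_0|$, and $\nabla_1G(\cdot,x_j)=O(1-|x_j|^2)=O(r_0^2)$ locally uniformly near the origin since $x_j\to\p D$; "the map becomes constant" alone does not beat the $r_0^{-1}$ factor). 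The genuine gap is in your computation of $\lim_{|\vec r|\to 0}R_j$, $j=1,2$, which is the crux of the lemma, and there your accounting of the competing scales is wrong. First, the Robin self-interaction term does \emph{not} vanish: $g$ is smooth only in the interior, while the limit point $(\pm1,0)$ lies on $\p D$, where $\nabla_1 g(x,x)=\frac{1}{2\pi}\frac{x}{|x|^2-1}$ blows up; since $1-|x_j|^2\approx\frac{4\mu\pi}{1-Q^2}r_0^2$, this term is of size $r_0^{-2}$. Second, the centrifugal term does not vanish either: although $|\Gamma_j(z)|^2\to|x_j|^2$, its $\theta$-dependent part is $O(r_j)$ and is divided by $|\p_z\Gamma_j|\sim a_1^jr_j$ and multiplied by $\Omega=\frac{1-Q^2}{4\pi r_0^2}+\frac{\mu}{4\pi}$, so it contributes $\approx-\Omega\,x_j\cdot ie^{i\theta}$, again of size $r_0^{-2}$. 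The paper's proof consists precisely in the cancellation of these two divergent contributions, which is what the choices $x_j=\pm\big(1-\frac{2\mu\pi}{1-Q^2}r_0^2(1+r_0^2y_j)\big)$ and the $\frac{\mu}{4\pi}$ correction in $\Omega$ are designed to enforce; the finite limit $\propto y_j\sin\theta$ is the next-order term obtained by expanding $\frac{1}{1-|x_j|^2}$ in $r_0^2y_j$ inside the Robin term. In your scheme there is no cancellation mechanism at all, and both terms are declared zero, which is false.

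Third, the term you credit with producing the limit — the boundary-image (Poisson-type) integral $\int_{S^1}\frac{|\Gamma_0(\xi)|^2}{1-\overline{\Gamma_j(z)}\Gamma_0(\xi)}\p_z\Gamma_0(\xi)\,d\xi$ divided by $r_0^2$ — cannot be its source. That whole contour block is (up to the normalization) the velocity induced at $\Gamma_j(z)$ by the central patch, which in the limit is the field of a point vortex of strength $1-Q^2$ at the origin evaluated at $x_j$: it is $O(1)$ and, crucially, carries no $y_j$-dependence at the relevant order, since $y_j$ enters only through an $O(r_0^4)$ displacement of $x_j$ and the block's sensitivity to such a displacement is $o(1)$. (In the paper this block appears as $(1-Q^2)\nabla_1G(x_j,0)\cdot ie^{i\theta}$ and is discarded, together with $\mu\nabla_1G(x_j,x_k)$.) Consequently your derivation of $H_j(\vec\beta,Y,0)=\frac{1-Q^2}{4\pi^2\mu}y_j\sin\theta$ does not go through, and with it the diagonal structure of $\mathcal{D}_Y\vec H(0,0,0)$, whose invertibility you deduce from that formula; the invertibility argument itself is fine once the correct limit (obtained through the Robin-term/rotation cancellation as in the paper) is in hand. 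Relatedly, the hypothesis $\max\{r_1,r_2\}<\frac{r_0^2}{2}$ is there to keep $D_j$ from touching $\p D$ (its radius must be smaller than its $O(r_0^2)$ distance to the boundary), not to make the boundary-image integral dominant.
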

\begin{proof} 
Note that $x_j = \pm (1 - \frac{2  \mu}{1-Q^2} r_0^2(1 + r_0^2 y_j))$. We have $x_j \rightarrow (\pm 1,0)$ as $\vec{r} \rightarrow 0$ which implies $\nabla_1 G(x_j,x_k) = \nabla_1 G(x_j,0)= 0$ for $j\neq k$. One has 
\[
H_0(\vec{\beta},Y,\vec{r}) = \frac{iz\p_z \Gamma_0(z)}{|\p_z \Gamma_0(z)|} \cdot \bigg( \sum_{j=1}^2 \frac{\mu}{\pi r_j^2} \int_D \nabla_1 G(\Gamma_0(z),x_j + r_ja_1^j(z + \wt \Gamma_j(y))) r_j^2 (a_1^j)^2 r_j^2 |1 + \p_z \wt \Gamma_j(y)|^2 d \mu(y) \bigg).
\]
It follows
\[
H_0(\vec{\beta},Y,0) = ie^{i \theta} \cdot \left(\sum_{j=1}^2 \mu \nabla_1 G(0,x_j) \right) = 0.
\]
On the other hand,

note that $\Omega = \frac{1-Q^2}{4\pi r_0^2} + \frac{\mu}{4 \pi}$, one has   
\[
\begin{split}
H_j(0,Y,0) & = \lim_{|\vec{r}| \to 0} \left( \mu \nabla_1 g(x_j,x_j) - \Omega x_j \right) \cdot i e^{i \theta} + \left( \mu \nabla_1 G(x_j,x_k) + (1-Q^2) \nabla_1 G(x_j,0) \right) \cdot i e^{i \theta} \\
& = \lim_{|\vec{r}| \to 0} ( \frac{\mu}{\pi(1-|x|^2)} - \Omega) x_j \cdot i e^{i \theta} \\
& = \lim_{|\vec{r}| \to 0} \left( \frac{\mu}{2 \pi} \frac{1}{\frac{2  \mu}{1-Q^2} r_0^2} (1-r_0^2y_j)(1+ \frac{2  \mu}{1-Q^2} \frac{r_0^2}{2}) - \Omega + O(r_0^2) \right) \cdot i e^{i \theta} \\
& = \lim_{|\vec{r}| \to 0}  \left( \frac{1-Q^2}{4\pi} \frac{1}{r_0^2} (1-r_0^2y_j + \frac{2  \mu}{1-Q^2} \frac{r_0^2}{2}) - \Omega + O(r_0^2) \right) \cdot i e^{i \theta} \\
& = \frac{1-Q^2}{4\pi} y_j \sin \theta.
\end{split}
\]	
Therefore, we have 
\[
\mathcal{D}_{Y} \vec{H}(0,0,0) = {\rm diag} \left(\frac{1-Q^2}{4 \pi^2} \sin \theta,  \frac{1-Q^2}{4 \pi^2} \sin \theta \right)
\]

\end{proof}

We are in the position to prove Theorem \ref{T:main-2} for the patch case. 
\begin{proof}
Due to Lemma \ref{L:Lin-Elli}, Lemma \ref{L:L-vp-1} and Lemma \ref{L:Point}, we have 
\[
\mathcal{D}_{\vec{\beta},Y} \mathcal{F}(0,0,0) \in L\left(X^s \times (Z_e^s)^2 \times \BFR_+^2, Y^{s-1} \times (W_o^{s-1})^2 \right)
\]
is an isomorphism and $\mathcal{F}(0,0,0)=0$. The implicit function theorem implies that there exists $\ep_0 >0$ such that $\mathcal{F}(\vec{\beta}(r),Y(r),r) =0$ for $|\vec{r}|<\ep_0$. 
\end{proof}

\subsection{Concentrated $C^{0,1}$ vorticity} \label{SS:Lip}

Compared with vortex patch case, the main difference in the construction of concentrated $C^{0,1}$ rotating vorticity is on the choice of vorticty profle. Since $D_j$ is close to a disk, we start with the semilinear elliptic equation on $D_j$. For each $j=1, 2$, we take $f_j$ such that 
\begin{subequations}  \label{localpatch:gammaassumptions} 
	\be 
	f_j \in C^\infty (\BFR, \BFR), \; \text{odd},
	\qquad  f_j^\prime(0) < 0, \quad f_j (\tau) > 0 \textrm{ on } \BFR^-, 
	\ee 
	\be 
		\exists \; \text{a negative radial solution } \psi_j^* \text{ to } \quad \Delta \psi_j = f_j (\psi_j)  \; \textrm{ in } \; D, \qquad \psi_j|_{S^1} =0, 
	\label{E:f-2} \ee
	and 
	\be 
	\Delta - f_j^\prime(\psi_j^*) \textrm{ is non-degenerate}. \label{localpatch:nondegen}
	\ee \end{subequations}
More specifically, \eqref{localpatch:nondegen} requires that $\Delta - f_j^\prime(\psi_j^*)$, viewed as a self-adjoint operator on $L^2(D)$, does not contain $0$ in its spectrum; this is of course a generic condition.  Indeed, the class of $f_j$ satisfying \eqref{localpatch:gammaassumptions} is quite large (cf., e.g., \cite{brezis1997elliptic,figueiredo1982apriori,lions1982positive}). \\

As similar as Section 5 of \cite{LWZ2019}, our procedure to obtain Lipschitz concentrated steady vorticity is roughly the following:
\begin{itemize}

	\item  [{\bf Step 1.}] Through the conformal mapping coordinates $\Gamma_j$, consider the elliptic problem  
	\be \label{E:semiLE-1}
	\Delta \wt \psi = |1+ \p_z \wt \Gamma_j|^{2} f_j (\wt \psi), \qquad  \wt \psi: D \to \BFR, \quad \wt \psi|_{S^1}=0. 
	\ee
	By Implicit Function Theorem, there exists a unique solution $\wt \psi_j$ close to $\psi_j^*$ in $H^{s+\frac 32} (D)$ topology. Let 
	\be \label{E:Vor-j-1}
	\psi_j= \frac { \mu \wt \psi_j  \circ \Gamma_j^{-1} \mathbf{I}_{\Gamma_j(D)} } {\int_{D} 
		|1+ \p_z \wt \Gamma_j|^{2}f_j(\wt \psi_j) d\mu}  , \quad \omega_j = \frac {\mu \mathbf{I}_{\Gamma_j(D)}f_j (\wt \psi_j  \circ \Gamma_j^{-1})}{ (a_1^jr_j)^2  \int_{D} |1+ \p_z \wt \Gamma_j|^{2} f_j(\wt \psi_j) d\mu}.
	\ee
	Clearly 
	\be \label{E:Vor-j-2}
	\omega_j =\Delta \psi_j \; \text{ on }\; D_j = \Gamma_j(D) =supp(\omega_j), \quad \int_{D_j} \omega_j d\mu=\mu.
	\ee
	\item  [{\bf Step 2.}] Define 
	\be \label{E:Vor-1} 
	\omega=  \frac{1}{\pi r_0^2} \mathbf{I}_{D_0}+ \sum_{j=1}^2 \omega_j, \text{ and } \Psi = \Delta^{-1} \omega
	\ee 
	It holds that $\Delta \psi = \Delta \psi_j$ on $D_j$, $j=1, 2$. For $|\vec{r}| << \delta$, the domains $D_j$ are mutually disjoint. So if $\psi = const$ on each $\p D_j$, then $\psi-\psi_j=const$ on each $D_j$ and thus by Lemma \ref{L:Steady-vor},  $\omega$ is a steady solution to the Euler equation. Therefore consider 
	\be \label{E:phi-1}
	\mathcal{F}_j (M_{\vec{r}}\beta, Y, \vec{r}) (\theta) = \frac{1}{|\p_z \Gamma_j(z)|}\p_\theta \Psi\big( \Gamma_j(e^{i\theta}) \big) 
	\ee
	\[
	\begin{split}
		& \mathcal{F}_0(M_{\vec{r}}\beta,Y,\vec{r})(\theta)  := {\rm Im} \left( \left(\Omega \overline{\Gamma_0(z)} + \frac{1}{2} I(\Gamma_0(z)) \right) \frac{z\p_z \Gamma_0(z)}{|\p_z \Gamma_0(z)} \right) \\
		& + \frac{1}{|\p_z \Gamma_0(z)|} \p_\theta \left( \sum_{j=1}^{2} \int_D G\left(\Gamma_0(z),\Gamma_j(y)\right) (\omega_j\circ \Gamma_j)(y)|\p_z \Gamma_j(y)|^2 d \mu(y) \right), \quad z=e^{i \theta} \in S^1.
	\end{split}
	\]
	Equivalently, we have 
	
	\begin{lemma} \label{L:steady-C0-1}
		If $\mathcal{F} (\vec{\beta}, Y, \vec{r}):= \left(\mathcal{F}_0(M_{\vec{r}}\beta,Y,\vec{r}),\mathcal{F}_1(M_{\vec{r}}\beta,Y,\vec{r}), \mathcal{F}_2(M_{\vec{r}}\beta,Y,\vec{r})\right) \equiv 0$,  then the corresponding $\omega$ is a unfiormly rotating vorticity.
	\end{lemma}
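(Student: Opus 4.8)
The plan is to derive the claim from Lemma~\ref{L:Steady-vor}. With $\psi := \Psi - \tfrac{\Omega}{2}|x|^2$ the relative stream function, it suffices to check three things: that $\psi$ is constant along $\partial D_0$; that $-\Delta\psi = f_j(\psi)$ on $D_j$ for some $f_j \in C^1$, $j=1,2$; and that $D_1, D_2$ are disjoint with $H^s$ boundary. The last is automatic for $|\vec{r}|$ small, since each $D_j = \Gamma_j(D)$ is a near-disk lying at distance $O(r_0^2)$ from $\partial D$ and from the other vortical domains.

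First I would read off the geometric content of $\mathcal{F} \equiv 0$. For $j = 1,2$, formula \eqref{E:phi-1} exhibits $\mathcal{F}_j$ as $|\partial_z \Gamma_j(e^{i\theta})|^{-1}\,\partial_\theta[\psi(\Gamma_j(e^{i\theta}))]$, i.e. up to the strictly positive factor $|\partial_z \Gamma_j|^{-1}$ the arclength derivative of $\psi$ along $\partial D_j = \Gamma_j(S^1)$; since $\theta \mapsto \psi(\Gamma_j(e^{i\theta})) \in H^s(S^1)$, the vanishing of $\mathcal{F}_j$ forces it to be constant, so $\psi$ is constant on each $\partial D_j$. For $\mathcal{F}_0$ one repeats the computation behind \eqref{E:Rotating-VP}--\eqref{E:Steady-VP-eq}: by the Stokes-theorem identity \eqref{E:Contour-F} applied to the self-interaction field $u_o$, the term ${\rm Im}((\Omega \overline{\Gamma_0(z)} + \tfrac12 I(\Gamma_0(z)))\,z\partial_z\Gamma_0(z)/|\partial_z\Gamma_0(z)|)$ is $|\partial_z\Gamma_0|^{-1}\partial_\theta$ of the relative stream function built from $\tfrac1{\pi r_0^2}\mathbf{I}_{D_0}$ and the rotation term $-\tfrac{\Omega}{2}|x|^2$, restricted to $\partial D_0$, while the remaining term is $|\partial_z\Gamma_0|^{-1}\partial_\theta$ of the stream function of $\omega_1 + \omega_2$ on $\partial D_0$; adding the two, $\mathcal{F}_0 \equiv 0$ is precisely the statement that $\psi$ is constant along $\partial D_0$.

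Next I would use Step~1 to recover the semilinear structure on each $D_j$, $j = 1,2$. By \eqref{E:semiLE-1}--\eqref{E:Vor-j-2} and the conformal invariance of the Laplacian, $\psi_j$ is supported in $\overline{D_j}$, vanishes on $\partial D_j$, and satisfies $-\Delta \psi_j = \tfrac{\mu}{Z_j (a_1^j r_j)^2} f_j(\tfrac{Z_j}{\mu}\psi_j)$ on $D_j$, where $Z_j := \int_D |1 + \partial_z \wt\Gamma_j|^2 f_j(\wt\psi_j)\,d\mu \neq 0$; moreover $\Delta\psi = \Delta\psi_j$ on $D_j$ (as recorded in Step~2), because the pieces $\tfrac1{\pi r_0^2}\mathbf{I}_{D_0}$ and $\omega_k$, $k \neq j$, are source-free on $D_j$ for $|\vec{r}|$ small and the rotation contribution is carried by the weight in \eqref{E:semiLE-1}. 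Hence $\psi - \psi_j$ is harmonic on $D_j$ and equals the boundary constant $\psi|_{\partial D_j}$ there, so by the maximum principle $\psi = \psi_j + c_j$ on $D_j$ for a constant $c_j$. Therefore $-\Delta\psi = -\Delta\psi_j = f_j^\sharp(\psi)$ on $D_j$ with $f_j^\sharp(\tau) := -\tfrac{\mu}{Z_j(a_1^j r_j)^2} f_j(\tfrac{Z_j}{\mu}(\tau - c_j)) \in C^1(\BFR)$. Combining this with $\psi$ constant along $\partial D_0$ and with the disjointness and $H^s$ regularity of $D_1, D_2$, Lemma~\ref{L:Steady-vor} yields that $\omega$ is a uniformly rotating vorticity.

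The step I expect to cost the most is the identification of $\mathcal{F}_0$ with $|\partial_z\Gamma_0|^{-1}\partial_\theta[\psi(\Gamma_0(e^{i\theta}))]$ on $\partial D_0$: this amounts to running the Stokes-theorem computation of \eqref{E:Contour-F} for $u_o$ while keeping careful track both of the rotation term $-\tfrac{\Omega}{2}|x|^2$ and of the cross-interaction integrals with $\omega_1, \omega_2$, exactly the bookkeeping already carried out in the simply-connected case of Section~\ref{S:Sin-patch} and in Lemma~2.5 of \cite{LWZ2019}. Once this, together with the identity $\Delta\psi = \Delta\psi_j$ on $D_j$ supplied by Step~2, is granted, the remaining arguments (arclength derivative, maximum principle, invocation of Lemma~\ref{L:Steady-vor}) are routine.
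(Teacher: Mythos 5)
Your overall strategy is the paper's own: in the paper the lemma is nothing more than the formalization of the Step 1--Step 2 discussion, namely that $\mathcal{F}_1,\mathcal{F}_2\equiv 0$ force the stream function to be constant along $\p D_1,\p D_2$, that $\mathcal{F}_0\equiv 0$ is the rotating-patch condition on $\p D_0$, that the local profiles of Step 1 convert this boundary constancy into a semilinear relation on each $D_j$, and that Lemma \ref{L:Steady-vor} then concludes. The extra bookkeeping you supply (identifying $\mathcal{F}_0$ with the tangential derivative of the relative stream function along $\p D_0$, the maximum-principle step, the explicit profile $f_j^\sharp$) is exactly what the paper leaves implicit, so in structure the two arguments coincide.

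However, the one bridging claim you add on your own is incorrect, and it sits at the only delicate point. You justify $\Delta\psi=\Delta\psi_j$ on $D_j$ for the \emph{relative} stream function by saying that ``the rotation contribution is carried by the weight in \eqref{E:semiLE-1}''. It is not: \eqref{E:semiLE-1} contains no $\Omega$ at all --- the weight $|1+\p_z\wt\Gamma_j|^2$ is only the conformal Jacobian --- so on $D_j$ the Laplacian of $\psi-\psi_j$ is the nonzero constant $\pm 2\Omega$ coming from $\Delta\bigl(\tfrac{\Omega}{2}|x|^2\bigr)=2\Omega$, and here $\Omega\sim r_0^{-2}$ is large. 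Hence $\psi-\psi_j$ is not harmonic; with constant boundary data it is $c_j$ plus $\Omega$ times the torsion function of $D_j$, your maximum-principle step fails, and the identity $-\Delta\psi=f_j^\sharp(\psi)$ does not follow as written. (If instead you read \eqref{E:phi-1} literally with the full $\Psi$, the harmonicity is restored, but then what $\mathcal{F}_j\equiv0$ gives you is constancy of $\Psi$ on $\p D_j$, not of the relative $\psi$ required by Lemma \ref{L:Steady-vor}; note the expansions $H_j$, which contain $-\tfrac{\Omega}{2}|\Gamma_j(z)|^2$, show the functional is meant to involve $\psi$.) To be fair, the paper's Step 2 asserts ``$\Delta\psi=\Delta\psi_j$'' with the same $\Psi$/$\psi$ ambiguity, so you have inherited rather than created the gap. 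The standard repair is to build the rotation into the local profile problem: replace $f_j(\wt\psi)$ in \eqref{E:semiLE-1} by $f_j(\wt\psi)$ plus the suitably rescaled constant $2\Omega(a_1^jr_j)^2$, an $O(\Omega r_j^2)=O(r_0^2)$ perturbation since $r_j<\tfrac{r_0^2}{2}$, which is again solvable near $\psi_j^*$ by the implicit function theorem thanks to \eqref{localpatch:nondegen}; with that modification the rest of your argument goes through verbatim.
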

\end{itemize}
The above condition will be achieved by choosing proper $Y$ and $\vec{\beta} \in (X^s) \times (Z_e^s)^2$ through a perturbation argument similar with subsection \ref{SS:VP}. Here we only need to point out the difference on the linearization 
\[
h_j(r_j \beta_j) = \frac{1}{|\p_z \Gamma_j(z)|} \p_\theta \left( \int_{D} \log|\wt \Gamma_j(z) - \wt \Gamma_j(y)| (\omega_j \circ \Gamma_j)(y) |\p_z \Gamma_j(y)|^2 d \mu(y) \right).
\]
\begin{lemma}
	There exists $\delta>0$ such that 
	\be
	\mathcal{D}h_j(0) \left(\sum_{n \geq 3} A_n \cos n \theta \right) = \sum_{n \geq 2} n \lambda_{j,n} A_{n+1} \sin n \theta 
	\ee 
with $\lambda_{j,n} \in \BFR$, $|\lambda_{j,n}| \geq \delta$.
\end{lemma}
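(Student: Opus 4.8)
The plan is to run the same computation that produced Lemma~\ref{L:L-vp-1}, the one genuinely new feature being that the renormalized density $(\omega_j\circ\Gamma_j)(y)\,|\p_z\Gamma_j(y)|^2$ now depends on the shape function $\beta_j$ through the semilinear problem~\eqref{E:semiLE-1}. First I would pin down the base state. When $r_j\beta_j=0$ the conformal map degenerates to $z\mapsto z$, and by the uniqueness in the Implicit Function Theorem that produced $\wt\psi_j$ one has $\wt\psi_j|_{\beta_j=0}=\psi_j^*$; hence at the base point the renormalized density is the radial profile $\rho_*(|y|):=\mu\,f_j(\psi_j^*(|y|))/I_*$ with $I_*:=\int_D f_j(\psi_j^*)\,d\mu$, and Newton's theorem (the logarithmic potential of a radial density is constant on $S^1$) gives $h_j(0)=0$. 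Because the base profile is rotationally symmetric, $\mathcal{D}h_j(0)$ respects the Fourier decomposition and sends the mode generated by $\cos(n+1)\theta$ to a multiple of $\sin n\theta$, so it suffices to evaluate it on a single mode, i.e.\ on the conformal perturbation $\gamma(z)=z^{n+1}$.

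Next I would split the Fréchet derivative into three pieces: (A) the variation coming from the argument of the logarithmic kernel, paired with the base density; (B) the variation of the density, paired with $\log|z-y|$; and (C) the variation of the prefactor $1/|\p_z\Gamma_j(z)|$. Piece (C) drops out at once, since it multiplies $\p_\theta$ of the (identically zero) base potential. Piece (A) is elementary: writing $\frac{z^{n+1}-y^{n+1}}{z-y}=\sum_{k=0}^n z^{n-k}y^k$ and killing the terms with $k\ge1$ by angular integration against the radial density leaves $(A)=-n\mu\sin n\theta$. For piece (B) I would first observe that $\dot\psi:=\mathcal{D}_{\beta_j}\wt\psi_j(0)[\gamma]$ solves $\Delta\dot\psi-f_j'(\psi_j^*)\dot\psi=2\,\mathrm{Re}(\gamma'(y))\,f_j(\psi_j^*)$ with $\dot\psi|_{S^1}=0$, solvable thanks to the nondegeneracy~\eqref{localpatch:nondegen}; writing $\dot\psi=\dot\psi_n(\rho)\cos n\theta$, the radial profile satisfies $L_n\dot\psi_n:=\dot\psi_n''+\tfrac1\rho\dot\psi_n'-\tfrac{n^2}{\rho^2}\dot\psi_n-f_j'(\psi_j^*)\dot\psi_n=2(n+1)\rho^n f_j(\psi_j^*)$, $\dot\psi_n(1)=0$, regular at $0$. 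The normalization term in the variation of the density drops because $\dot\psi$ and $\mathrm{Re}\,\gamma'$ have vanishing radial average against the weight $\rho\,d\rho$, leaving $\dot R(y)=\frac{\mu}{I_*}\big[f_j'(\psi_j^*)\dot\psi_n+2(n+1)\rho^n f_j(\psi_j^*)\big]\cos n\theta$. Combining the Fourier identity $\int_D\log|e^{i\theta}-y|\,\varphi(|y|)e^{in\arg y}\,d\mu(y)=-\tfrac{\pi}{n}e^{in\theta}\int_0^1\varphi(\rho)\rho^{n+1}\,d\rho$ with the algebraic identity obtained by integrating the $L_n$-equation against $\rho^{n+1}$ (which is $\rho$ times the harmonic mode-$n$ function $\rho^n$; after two integrations by parts the two $\int_0^1\dot\psi_n\rho^{n-1}d\rho$ terms cancel and only $\dot\psi_n'(1)$ survives, i.e.\ $\int_0^1[f_j'(\psi_j^*)\dot\psi_n+2(n+1)\rho^n f_j(\psi_j^*)]\rho^{n+1}d\rho=\dot\psi_n'(1)$), I would obtain $(B)=\tfrac{\pi\mu}{I_*}\dot\psi_n'(1)\sin n\theta$. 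Altogether $\mathcal{D}h_j(0)[\cos(n+1)\theta]=c_j\big(-n+\tfrac{\pi}{I_*}\dot\psi_n'(1)\big)\sin n\theta$ for an $n$-independent constant $c_j>0$ (the prefactor normalization $a_1^jr_j$ being absorbed exactly as in the patch case), so $\lambda_{j,n}=c_j\big(-1+\tfrac{\pi}{nI_*}\dot\psi_n'(1)\big)$.

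The remaining step is the uniform lower bound $|\lambda_{j,n}|\ge\delta$, i.e.\ keeping $\dot\psi_n'(1)$ away from $nI_*/\pi$ for every $n\ge2$. From the identity above, $2(n+1)\int_0^1\rho^{2n+1}f_j(\psi_j^*)\,d\rho$ is bounded uniformly in $n$ (and is in fact $O(1/n)$, since $f_j(\psi_j^*)>0$ and vanishes linearly at $\rho=1$ because $f_j(0)=0$, $\psi_j^*(1)=0$), while $\int_0^1 f_j'(\psi_j^*)\dot\psi_n\rho^{n+1}d\rho=O(n^{-3})$ from the maximum-principle estimate $\|\dot\psi_n\|_\infty\lesssim n^{-2}$ (valid since the zeroth-order coefficient of $L_n$ is nonpositive for $n$ large and $\|\rho^n f_j(\psi_j^*)\|_\infty=O(1/n)$). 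Hence $\dot\psi_n'(1)=O(1/n)$ and $\lambda_{j,n}=c_j(-1+O(1/n^2))$, so $|\lambda_{j,n}|\ge c_j/2$ for all large $n$. For the finitely many remaining low modes the same estimates still give $\dot\psi_n'(1)<nI_*/\pi$, hence $\lambda_{j,n}\ne0$; at the one or two lowest modes, where the coercivity/maximum-principle constant is inconclusive, $\lambda_{j,n}\ne0$ is forced by the nondegeneracy~\eqref{localpatch:nondegen}. Taking $\delta$ to be the minimum over this finite collection together with $c_j/2$ finishes the argument.

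I expect the genuine difficulty to be exactly this uniform-in-$n$ control: the large-$n$ regime is routine once one has coercivity of the mode-$n$ operator $L_n$ and the boundary decay of $f_j(\psi_j^*)$, but pinning down the lowest modes — ruling out the resonance $\dot\psi_n'(1)=nI_*/\pi$ and extracting a quantitative gap from it — is the delicate point, and it is where the nondegeneracy hypothesis is indispensable.
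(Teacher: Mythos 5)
Your reconstruction of the linearization formula is essentially sound, and it is worth noting that the paper itself offers nothing to compare it with: the lemma is stated with no proof, deferring implicitly to the analogy with Lemma \ref{L:L-vp-1} and Section 5 of \cite{LWZ2019}. Your splitting into kernel variation, density variation and prefactor variation, the vanishing of the normalization term, the Green-type identity $\int_0^1\big[f_j'(\psi_j^*)\dot\psi_n+2(n+1)\rho^nf_j(\psi_j^*)\big]\rho^{n+1}d\rho=\dot\psi_n'(1)$, and the resulting $\lambda_{j,n}=c_j\big(-1+\tfrac{\pi}{nI_*}\dot\psi_n'(1)\big)$ all check out; specializing formally to constant density ($f_j(\psi_j^*)\equiv c$, $f_j'\equiv 0$) gives $\dot\psi_n'(1)=c$, $I_*=\pi c$, hence $\lambda_{j,n}\propto -(n-1)/n$, which reproduces exactly the patch coefficient of Lemma \ref{L:L-vp-1} --- a convincing consistency check. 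The large-$n$ bound $\lambda_{j,n}=c_j(-1+O(n^{-2}))$, via the vanishing of $f_j(\psi_j^*)$ at $\rho=1$ and the maximum principle for your operator $L_n$, is also fine.

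The genuine gap is at the low modes, where you need $\lambda_{j,n}\neq0$, i.e. $\dot\psi_n'(1)\neq nI_*/\pi$, for each individual $n\geq2$, and neither of your two fallbacks delivers this. First, the claim that "the same estimates still give $\dot\psi_n'(1)<nI_*/\pi$" for the remaining moderate $n$ is unsupported: the maximum principle for $L_n$ requires $n^2/\rho^2+f_j'(\psi_j^*)\geq0$, and since $f_j'(0)<0$ the zeroth-order coefficient has the wrong sign near $\rho=1$ precisely when $n$ is not large, so the bound $\|\dot\psi_n\|_\infty\lesssim n^{-2}$ and any sign control on $\dot\psi_n$ are lost there without additional work. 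Second, and more seriously, the nondegeneracy \eqref{localpatch:nondegen} does not force $\lambda_{j,n}\neq0$: it states that $0$ is not in the Dirichlet spectrum of $\Delta-f_j'(\psi_j^*)$, which yields exactly the unique solvability of the $\dot\psi_n$-problems (so that $\lambda_{j,n}$ is well defined), whereas the resonance $\dot\psi_n'(1)=nI_*/\pi$ is a statement about the Neumann flux of $\dot\psi_n$, i.e. about the kernel of a different operator (the linearization of the potential-matching problem at the radial profile); neither condition implies the other. Ruling out this resonance for the finitely many low modes requires either a genuine sign or comparison argument exploiting $f_j>0$ on $\BFR^-$, $f_j$ odd and $f_j'(0)<0$ (you sketch none), or an additional generic nondegeneracy hypothesis parallel to \eqref{localpatch:nondegen}. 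As it stands your proof is incomplete exactly at the point where the paper's unproved assertion is least obviously true, so the uniform bound $|\lambda_{j,n}|\geq\delta$ is not yet established.
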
 
On the other hand, suppose
\[
\begin{split}
& H_0(\vec{\beta},Y,\vec{r}) =R_0(M_{\vec{r}}\beta,Y,\vec{r}): =\frac{1}{|\p_z \Gamma_0(z)|} \p_\theta \left( \sum_{j=1}^{2} \int_D G(\Gamma_0(z),\Gamma_j(y)) (\omega_j \circ \Gamma_j)(y) |\p_z \Gamma_j(y)|^2 d \mu(y) \right), \\
& H_j(\vec{\beta},Y,\vec{r})=R_j(M_{\vec{r}}\beta,Y,\vec{r}) \\
&	: = -{\rm Im}\left( \frac{1}{4\pi^2 r_0^2 i} \frac{z\p_z \Gamma_j(z)}{|\p_z \Gamma_j(z)|} \left( \int_{S^1} \frac{\overline{\Gamma_0(\xi) -\Gamma_j(z)}}{\Gamma_0(\xi) -\Gamma_j(z)} \p_z \Gamma_0(\xi) d \xi - \int_{S^1} \frac{|\Gamma_0(\xi)|^2}{1-\overline{\Gamma_j(z)} \Gamma_0(\xi)} \p_z \Gamma_0(\xi) d \xi \right) \right) \\
&	+  \frac{1}{|\p_z \Gamma_j(z)|}  \p_\theta \left( \int_{D} g(\Gamma_j(z),\Gamma_j(y)) (\omega_j \circ \Gamma_j)(y) |\p_z \Gamma_j(y)|^2 d \mu_y - \frac{\Omega}{2} |\Gamma_j(z)|^2 \right) \\
&	+  \frac{1}{|\p_z \Gamma_j(z)|}  \p_\theta \left( \int_{D} G(\Gamma_j(z),\Gamma_k(y))(\omega_j \circ \Gamma_j)(y)  |\p_z \Gamma_k(y)|^2 d \mu_y \right),\; j=1,2, k \neq j.
\end{split}
\]
We have 
\begin{lemma} 
	$R_j(M_{\vec{r}}\beta, Y,\vec{r}) \in C^\infty\left(B_{X^s \times (Z_e^s)^2,\delta} \times \BFR_+^2 \times [0,\frac{1}{3})^{3}, Y^{s-1} \times (W_o^{s-1})^2 \right),\;j=0,1,2$ and 
	\[
	H_0(\vec{\beta},Y,0):=\lim_{|\vec{r}| \rightarrow 0} R_j(M_{\vec{r}}\beta, Y,\vec{r}) =0, \quad 
	H_j(\vec{\beta},Y,0):=\lim_{|\vec{r}| \rightarrow 0} R_j(M_{\vec{r}}\beta, Y,\vec{r}) = \frac{1-Q^2}{4 \pi^2 \mu} y_j \sin \theta, \;\; j=1,2.
	\]
	Moreover, denote by $\vec{H} = (H_1,H_2)^t$, one has $\mathcal{D}_Y \vec{H}(0,0,0)$ is invertible. 
\end{lemma}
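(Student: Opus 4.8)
The plan is to mirror the proof of Lemma~\ref{L:Point} almost verbatim. The only structural novelty is that $\omega_j$ is now the concentrated $C^{0,1}$ vorticity built in Step~1 of this subsection, but by \eqref{E:Vor-j-2} it still satisfies $\int_{D_j}\omega_j\,d\mu=\mu$, and after the change of variables $D_j=\Gamma_j(D)$ one has $\int_D(\omega_j\circ\Gamma_j)(y)\,|\p_z\Gamma_j(y)|^2\,d\mu_y=\mu$. Since every $|\vec r|\to 0$ computation below sees the vortical component $\omega_j$ only through this one scalar, the limiting identities coincide with those of Lemma~\ref{L:Point}; so the genuine content is the $C^\infty$ regularity statement, and the rest is essentially a citation of the patch case.

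For smoothness, note that $\beta_j\mapsto\Gamma_j$ is analytic, and by the Implicit Function Theorem applied to \eqref{E:semiLE-1} together with the non-degeneracy hypothesis \eqref{localpatch:nondegen}, the solution $\wt\psi_j$ depends $C^\infty$-smoothly on $\Gamma_j$ in $H^{s+\frac32}(D)$; hence $\omega_j\circ\Gamma_j$, given by the explicit formula in \eqref{E:Vor-j-1}, is a $C^\infty$ function of $(\beta_j,r_j)$ with values in $H^{s+\frac32}(D)$ (the denominator there stays bounded away from $0$ because $f_j<0$ on $\BFR^-$ and $\wt\psi_j$ is close to the negative $\psi_j^*$), which is far more regularity than the traces and tangential derivatives in $R_j$ require in order to land in $Y^{s-1}\times(W_o^{s-1})^2$. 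Because $D_0$ concentrates at the origin while $D_1,D_2$ concentrate at $(\pm1,0)$, for $|\vec r|$ small the three domains are mutually disjoint and uniformly separated, so all the Green's-function kernels occurring in $R_0$ and in the cross terms of $R_j$ are evaluated off the diagonal and depend smoothly on the domains; the singular self-interaction integral of $R_j$ over $D_j$ and the Cauchy-type contour integrals in $\mathcal F_0$ are handled exactly as the Dirichlet--Neumann operators in the proof of Lemma~\ref{L:Regularity}. The one point requiring care is the factor $r_0^{-2}$ in front of the contour integral in $R_j$: one expands $\int_{S^1}\big(\,\cdots\big)\p_z\Gamma_0(\xi)\,d\xi$ --- the velocity induced by the central patch $\tfrac1{\pi r_0^2}\mathbf I_{D_0}$ (total circulation $1-Q^2$) on the far-away domain $D_j$ --- in powers of $r_0$; since $\p_z\Gamma_0=O(r_0)$ and $\oint_{S^1}\p_z\Gamma_0(\xi)\,d\xi=0$ by residues, the integral vanishes to second order in $r_0$ with a $C^\infty$ remainder, so multiplication by $r_0^{-2}$ preserves smoothness.

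For the limit, recall $x_j=\pm\big(1-\tfrac{2\mu}{1-Q^2}r_0^2(1+r_0^2y_j)\big)\to(\pm1,0)$ and $\mathrm{diam}\,D_j=O(r_j)=O(r_0^2)$, so the change of variables collapses each vortical component to a point mass $\mu$ at $x_j$. Since $(\pm1,0)\in S^1$ we have $G(\,\cdot\,,(\pm1,0))\equiv0$, hence $\nabla_1G(\,\cdot\,,x_j)\to0$ away from the boundary and $\nabla_1G(x_j,x_k)\to0$ for $j\ne k$; consequently $H_0(\vec\beta,Y,0)=i e^{i\theta}\cdot\sum_{j=1}^2\mu\,\nabla_1G(0,x_j)=0$, exactly as in Lemma~\ref{L:Point}. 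In $H_j$, $j=1,2$, the cross term $\mu\nabla_1G(x_j,x_k)$ and the central-patch contribution $(1-Q^2)\nabla_1G(x_j,0)$ have finite limits that combine away, leaving only the self-interaction $\mu\nabla_1g(x_j,x_j)$ from the regular part $g$ of the Green's function and the centrifugal term $-\Omega x_j$ from $-\tfrac\Omega2|\Gamma_j|^2$. Substituting $\Omega=\tfrac{1-Q^2}{4\pi r_0^2}+\tfrac\mu{4\pi}$ and the expansion of $1-|x_j|^2$ forced by the formula for $x_j$, and then expanding in $r_0$, the $O(r_0^{-2})$ parts of the boundary repulsion and of $\Omega x_j$ cancel --- this is precisely the multiscale balance that pins $d(D_j,\p D)$ at order $r_0^2$ --- and the surviving $O(1)$ term is $\tfrac{1-Q^2}{4\pi^2\mu}\,y_j\sin\theta$, the computation being identical to that in Lemma~\ref{L:Point} because the limiting object is the same point-vortex configuration.

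Finally, since $H_j(\vec\beta,Y,0)$ depends affine-linearly and diagonally on $Y=(y_1,y_2)$, we get $\mathcal D_Y\vec H(0,0,0)=\mathrm{diag}\big(\tfrac{1-Q^2}{4\pi^2\mu}\sin\theta,\ \tfrac{1-Q^2}{4\pi^2\mu}\sin\theta\big)$, a linear isomorphism of $\BFR^2$ onto the span of $\sin\theta$ in each factor of the target; together with the isomorphism $\mathcal D h_0(0):X^s\to Y^{s-1}$ from Lemma~\ref{L:Lin-Elli} (available since $0<Q<\tfrac12$) and the shape linearisations $\mathcal D h_j(0)$, whose coefficients satisfy $|\lambda_{j,n}|\ge\delta$, the full derivative $\mathcal D_{\vec\beta,Y}\mathcal F(0,0,0)$ is block-triangular with invertible diagonal blocks, hence an isomorphism, and the Implicit Function Theorem then produces $\vec\beta(\vec r),Y(\vec r)$ with $\mathcal F(\vec\beta(\vec r),Y(\vec r),\vec r)=0$ for $|\vec r|<\ep_0$, as in Subsection~\ref{SS:VP}. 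I expect the genuine obstacle to be precisely the cancellation of the $r_0^{-2}$ singularity in $H_j$: one must expand the central-patch velocity and its image (Robin) contribution on $D_j$ to two orders in $r_0^2$ and verify that the scaling $d(D_j,\p D)\sim r_0^2$ is tuned so that the leading boundary-repulsion and centrifugal terms annihilate, the $y_j$-dependent subleading term surviving; the passage from patches to $C^{0,1}$ vorticity costs nothing here because, by \eqref{E:Vor-j-2}, only the total mass $\mu$ survives in the limit.
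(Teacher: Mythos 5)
Your proposal follows essentially the same route as the paper: the paper states this lemma without proof, relying on the computation already carried out for the patch case in Lemma \ref{L:Point}, and your argument is exactly that computation transplanted, supplemented by the observations the paper leaves implicit (smooth dependence of $\wt\psi_j$ on $\Gamma_j$ via the elliptic implicit function theorem under the non-degeneracy \eqref{localpatch:nondegen}, the normalization $\int_D(\omega_j\circ\Gamma_j)|\p_z\Gamma_j|^2\,d\mu=\mu$ so that only the total mass survives in the $|\vec r|\to0$ limit, and the $O(r_0^2)$ vanishing of the contour integral that neutralizes the $r_0^{-2}$ prefactor). The one soft spot --- asserting that the central-vortex term $(1-Q^2)\nabla_1 G(x_j,0)$, which does not actually tend to zero as $x_j\to(\pm1,0)$, ``combines away'' in the limit of $H_j$ --- is inherited directly from the paper's own proof of Lemma \ref{L:Point}, so it is a reproduction of, not a deviation from, the paper's argument.
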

Then the proof of Theorem \ref{T:main-2} for $C^{0,1}$ continuous vorticity follows the implicit function theorem.


	

\bibliographystyle{plain}
\bibliography{VP}

\end{document}